\documentclass[12pt]{amsart}

\usepackage{amssymb,latexsym, amscd,pb-diagram}
\usepackage{cases}
\usepackage{graphicx}
\usepackage{amssymb}
\usepackage{amsfonts}
\usepackage{amsmath}
\usepackage{amscd}
\usepackage[square, numbers, sort&compress]{natbib}

\usepackage{hyperref}
\usepackage{xcolor}
\usepackage[all]{xy}
\usepackage{tikz-cd}
\usepackage{geometry}
\usepackage{arydshln}
\usepackage[T1]{fontenc}
\usepackage[utf8]{inputenc}

 \newtheorem{thm}{Theorem}[section]
 \newtheorem{problem}{Problem}
 \newtheorem{cor}[thm]{Corollary}
 \newtheorem{lem}[thm]{Lemma}
 \newtheorem{prop}[thm]{Proposition}
  \newtheorem{conj}[thm]{Conjecture}
  \theoremstyle{definition}
 \newtheorem{defn}[thm]{Definition}

 \theoremstyle{remark}
 \newtheorem{rem}{Remark}
 \newtheorem{example}{Example}[section]
 \numberwithin{equation}{section}
 
\begin{document}
\title[ Equivariant Bordism and  Reduced Characteristic Numbers ]
 {Reduced characteristic number criteria for equivariant bordism of $T^k$- and \((\mathbb Z_2)^k\)-manifolds with isolated fixed points}
\author{ Runze Chen,  Zhi L\"u and Leqi Yang}

\address{School of Mathematical Science, Fudan University, Shanghai, China} \email{20110180032@fudan.edu.cn}
\address{School of Mathematical Science, Fudan University, Shanghai, China}
\email{zlu@fudan.edu.cn}
\address{School of Mathematical Science, Fudan University, Shanghai, China}
\email{24110180058@m.fudan.edu.cn}
\keywords{Equivariant bordism, equivariant characteristic numbers, isolated fixed points,
 $T^k$-actions, $(\mathbb{Z}_2)^k$-actions, localization formulae}
\subjclass[2020]{57R85, 57R91, 57S17, 57R20, 55N91}


\begin{abstract}
Classical  equivariant bordism theories require computing the full collection of equivariant characteristic numbers to detect whether an equivariant manifold bounds equivariantly or not.
This paper establishes simplified equivariant bordism characterizations
for two families of equivariant manifolds with isolated fixed points: unitary
\(T^k\)-manifolds and
closed smooth \((\mathbb{Z}_2)^k\)-manifolds.
For any  unitary $T^k$-manifold $M$ with isolated fixed points, we establish an equivariant unitary bordism criterion built entirely from
 a single polynomial of equivariant Chern classes.
We further introduce the minimal distinguishing degree and obtain two key inequalities that 
capture the interplay between $\dim M$ and the Euler characteristic $\chi(M)$ through this minimal distinguishing degree. These inequalities settle the existence problem of a linear lower bound for \(\chi(M)\) within the framework of Kosniowski’s conjecture and  partially verify the conjecture under natural admissible assumptions.
We also provide an alternative proof settling  the toric generalization of Kosniowski’s conjecture when \(\dim M=2k\).
By contrast, for a closed smooth $(\mathbb{Z}_2)^k$-manifold with isolated fixed points, we  derive a more concise equivariant bordism criterion relying solely on the powers of the top equivariant Stiefel–Whitney class.
Our new criteria substantially reduce computational demands.


 \end{abstract}

\maketitle

\section{Introduction}
Equivariant bordism, originating from foundational work of Conner and Floyd in the 1960s~\cite{CF1, CF, CF2}, is an important research area in geometric topology and transformation group theory, focusing on classifying smooth closed manifolds with compact Lie group actions. Equivariant characteristic numbers, including equivariant Chern numbers and equivariant Stiefel–Whitney numbers, serve as the fundamental  invariants for distinguishing equivariant bordism classes. Two major families of group actions have been widely studied: the torus group $T^k=(S^1)^k$ acting on orientable or unitary manifolds, and elementary abelian 2-group  $(\mathbb{Z}_2)^k$ acting  on unoriented closed smooth manifolds (see, e.g., \cite{CF,  St, tD1, Da,  K,  BV2, AB,  DJ, AP, F, GKM, S, G,  H, BP1, T}).
Localization formulas, which recover global equivariant characteristic numbers from local fixed-point data, have become   powerful computational tools throughout the history of this subject. Among them, there are two fundamental localization formulas: the Atiyah–Bott–Berline–Vergne localization formula for torus actions  and the tom Dieck–Kosniowski–Stong localization formula for $(\mathbb{Z}_2)^k$-actions. We emphasize that the study of equivariant manifolds with isolated fixed points bears distinctive intrinsic features and theoretical profundity, giving rise to numerous theoretical frameworks and research branches,  including toric varieties in algebraic geometry, toric geometry, DJ theory, GKM theory   and toric topology~\cite{ Da, DJ, F,  GKM,BP1}.

As a pioneer of modern equivariant bordism theory,  tom Dieck established a series of fundamental results for both real and complex equivariant bordism in the 1970s, laying the theoretical framework for later research (e.g., see~\cite{tD3, tD1, tD2, BD}).
In the setting of equivariant unoriented  bordism for $(\mathbb{Z}_2)^k$-manifolds, tom Dieck proved a landmark theorem: the equivariant unoriented bordism class of any smooth closed $(\mathbb{Z}_2)^k$-manifold is completely determined by all its  equivariant Stiefel–Whitney numbers.
Meanwhile, he extended the framework to unitary $G$-manifolds
 with $G=T^k\times\mathbb{Z}_m$,
 proving that equivariant K-theoretic Chern numbers can completely characterize their equivariant unitary bordism classes.

Subsequent breakthroughs focused specifically on unitary \(T^k\)-manifolds and their equivariant cohomology Chern numbers. Guillemin, Ginzburg and Karshon  proved in \cite{G} that for a unitary \(T^k\)-manifold with isolated fixed points (i.e., the fixed point set is  nonempty and finite), its equivariant unitary bordism class is completely characterized by all equivariant cohomology Chern numbers. They further conjectured that this  characterization still holds even without the isolated fixed point hypothesis, and Lü and Wang provided an affirmative solution to this conjecture in \cite{LW}.

These foundational results  provide the general framework for further research, yet they require computing the complete collection  of  equivariant characteristic numbers, imposing heavy computational demands.

Several partial simplifications have been achieved for unitary $T^k$-manifolds with isolated fixed points in specific dimensional cases. Note (cf \cite{K}) that any such $T^k$-manifold with a finite nonempty  fixed point set is even-dimensional and satisfies $\dim M\ge 2k$.
When $\dim M=2k$, Lü and Tan showed in \cite{LT1} that only the equivariant cohomology Chern numbers constructed from the first and second  equivariant Chern classes are sufficient to determine the equivariant unitary bordism class of $M$. For $\dim M=2(k-1)$,  Wen and Ma proved in \cite{WM}  that the first six equivariant Chern classes can play the same role as the case $\dim M=2k$.
These case-specific results strongly suggest that, for unitary \(T^k\)-manifolds with isolated fixed points, we do not need the entire family of equivariant cohomology Chern numbers to judge whether a unitary $T^k$-manifold is an equivariant boundary. This naturally raises the following problem:

\begin{problem}\label{problem}
For arbitrary $2n$-dimensional unitary $T^k$-manifold with isolated fixed points, what minimal collection of equivariant cohomology  Chern numbers can fully characterize its equivariant unitary bordism class?
\end{problem}

We  establish an equivariant unitary bordism criterion based upon a single distinguishing polynomial, which is an equivariant cohomology polynomial in equivariant Chern classes satisfying local fixed-point conditions (see Definition~\ref{dis-poly-notion}).
Lemma~\ref{polynomial} guarantees the existence of such polynomials, which yields this reduced bordism characterization, thereby resolving Problem~\ref{problem}. We state our main result as follows.

\begin{thm}\label{main thm1}
Let $M^{2n}$ be a $2n$-dimensional unitary $T^k$-manifold with a finite fixed point set $M^{T^k}$. Then $M^{2n}$ bounds equivariantly if and only if there exists a distinguishing  polynomial $g$ such that $$\langle g^l, [M]\rangle = 0$$ for all integers $l < |M^{T^k}|$, where $[M]$ denotes the fundamental homology class of $M$.
\end{thm}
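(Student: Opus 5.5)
Throughout, write the fixed point set as $M^{T^k}=\{p_1,\dots,p_m\}$, so $m=|M^{T^k}|$.

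The plan is to combine the Guillemin--Ginzburg--Karshon characterization of bordism with the Atiyah--Bott--Berline--Vergne localization formula, and then use a Vandermonde argument on the values of $g$ at the fixed points. By \cite{G}, $M$ bounds equivariantly if and only if all equivariant Chern numbers vanish. Moreover, it is known (tom Dieck, Stong) that for isolated fixed points this happens if and only if the tangent representations at the fixed points cancel in pairs. More precisely, the multiset of signed weight data $\{\tau_{p}M\}$ must vanish in the appropriate Grothendieck/bordism sense, i.e.\ the fixed-point data are equivariantly null.

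For the "only if" direction there is little to do. If $M$ bounds, every equivariant Chern number vanishes by \cite{G}. In particular $\langle g^l,[M]\rangle=0$ for every polynomial $g$ in the equivariant Chern classes and every $l$, so any distinguishing polynomial (one exists by Lemma~\ref{polynomial}) works.

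For the "if" direction, apply ABBV localization:
\[
\langle g^l,[M]\rangle=\sum_{i=1}^m \frac{g(p_i)^l}{e(\tau_{p_i}M)}\in H^*(BT^k;\mathbb Q)\ \text{or its fraction field}.
\]
Here $g(p_i)$ is the restriction of $g$ to $p_i$, a polynomial in the weights at $p_i$. Group the fixed points according to the value $g(p_i)$; let the distinct values be $a_1,\dots,a_s$ with $s\le m$, and put $c_j=\sum_{g(p_i)=a_j}1/e(\tau_{p_i}M)$. The hypotheses give $\sum_j a_j^l c_j=0$ for $0\le l\le m-1$, hence for $0\le l\le s-1$. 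The Vandermonde matrix $(a_j^l)$ is invertible over the fraction field $\mathbb Q(t_1,\dots,t_k)$ because the $a_j$ are distinct, so every $c_j=0$.

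The key input is the definition of a distinguishing polynomial (Definition~\ref{dis-poly-notion}). I expect it to state that $g(p)=g(q)$ if and only if the tangent representations at $p$ and $q$ agree up to sign/orientation, i.e.\ they have the same unordered weight multiset up to the sign conventions relevant for unitary bordism. Granting this, within each group all $e(\tau_{p_i}M)$ agree up to sign. Then $c_j=0$ forces equally many points of each sign, i.e.\ the points of each class cancel in pairs with opposite orientation. Then every equivariant Chern number $\langle f,[M]\rangle=\sum_i f(p_i)/e(\tau_{p_i}M)$ vanishes, since $f(p_i)$ depends only on the representation. By \cite{G}, $M$ bounds.

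The main obstacle is the last step: making sure the distinguishing property of $g$ really separates fixed points precisely up to the equivalence under which contributions cancel. In particular, points with the same $g$-value must have Euler classes equal up to sign, so that $c_j=0$ forces genuine pairwise cancellation rather than an accidental rational relation. If the definition only guarantees that $g$ separates distinct weight multisets, I would first show that two unitary representations with the same multiset of weights up to sign have Euler classes differing by $\pm1$. I would then verify that the sign matches the orientation convention used in \cite{G}. The Vandermonde step and the localization formula themselves are routine.
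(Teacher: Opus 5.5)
Your proposal is correct and is essentially the paper's proof. It uses ABBV localization and a Vandermonde system in the distinct values of $g$; your groups by $g$-value are exactly the paper's classes in $M^{T^k}/\sim$, so the system forces $\varepsilon_{[p]}=0$ for every class, and the Chern-number bordism criterion finishes (Guillemin--Ginzburg--Karshon suffices for isolated fixed points, where the paper cites L\"u--Wang). The obstacle you flag disappears under Definition~\ref{dis-poly-notion}: equal $g$-values force $c^{T^k}(T_pM)=c^{T^k}(T_qM)$, hence equal $c_n^{T^k}$ and equal values of every symmetric $f$, so $c_j=\varepsilon_{[p]}/\sigma_n(\lambda_{p,1},\dots,\lambda_{p,n})$ and the conclusion is exactly Lemma~\ref{zero}.
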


\begin{rem}
In Subsection~\ref{Example},
we construct an explicit example to show that the top equivariant Chern class alone cannot serve as a valid choice of $g$ (stated in Theorem~\ref{main thm1}) for $n>1$, which clarifies the essential limitation of using a single top equivariant Chern class.
In  Subsection~\ref {Case-k=n},  we  derive two admissible explicit expressions of $g$
 under the assumption \(k=n\),
which implies the non-uniqueness of $g$.
 \end{rem}

For  a $2n$-dimensional unitary $T^k$-manifold $M^{2n}$ with a finite fixed point set $M^{T^k}$,  we define the {\em  minimal distinguishing degree}
$${\bf m}(k, M^{2n})=\min\{\deg g\big| g \text{ is a distinguishing  polynomial}\},$$
which is an even positive integer 
by Lemma~\ref{polynomial}, which guarantees  the existence of  $g$, as mentioned before,   This minimal distinguishing degree depends on the manifold $M^{2n}$ as well as its $T^k$-action (see Subsection~\ref{Case-k=n}).

Recall from \cite{AP} that the Euler characteristic $\chi(M^{2n})=\chi(M^{T^k})$, so $\chi(M^{2n})=|M^{T^k}|$, which implies that the number $|M^{T^k}|$ is uniquely determined by $M^{2n}$ and independent of the $T^k$-action  since the Euler characteristic is a topological invariant.
Moreover, as a  direct consequence of Theorem~\ref{main thm1}, we obtain the following dimensionally constrained vanishing result using our minimal distinguishing degree.
\begin{cor}\label{md-deg inequ}
 Suppose that $M^{2n}$ is a  $2n$-dimensional unitary $T^k$-manifold with a finite fixed point set.
 If ${\bf m}(k, M^{2n})\chi(M^{2n})\leq 2n$, then $M^{2n}$ bounds equivariantly. 
\end{cor}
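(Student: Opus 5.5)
The plan is to deduce the corollary directly from Theorem~\ref{main thm1} by a degree count. Choose a distinguishing polynomial $g$ that realizes the minimal distinguishing degree, so $\deg g={\bf m}(k,M^{2n})$; such a $g$ exists by Lemma~\ref{polynomial}. Write $N=|M^{T^k}|=\chi(M^{2n})$. By Theorem~\ref{main thm1} it then suffices to show that $\langle g^l,[M]\rangle=0$ for every integer $l<N$, that is, for $0\le l\le N-1$.

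The key point is that $g$ lies in the equivariant cohomology $H^*_{T^k}(M)$ and is homogeneous of degree ${\bf m}(k,M^{2n})$. The equivariant Kronecker pairing, meaning integration over the fiber $M\to pt$, is a map of $H^*(BT^k)$-modules
\[
\langle\,\cdot\,,[M]\rangle\colon H^{*}_{T^k}(M)\to H^{*-2n}(BT^k).
\]
It lowers degree by $2n$. Hence $\langle g^l,[M]\rangle$ has degree $l\,{\bf m}(k,M^{2n})-2n$, and it vanishes whenever this number is negative. If $l\le N-1$, then
\[
l\,{\bf m}(k,M^{2n})\le (N-1){\bf m}(k,M^{2n}) = {\bf m}(k,M^{2n})\chi(M^{2n})-{\bf m}(k,M^{2n})\le 2n-{\bf m}(k,M^{2n})<2n,
\]
where the last inequality uses ${\bf m}(k,M^{2n})>0$. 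So every pairing with $l<N$ lands in negative degree and is therefore zero. Theorem~\ref{main thm1} then shows that $M^{2n}$ bounds equivariantly.

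The only things to check are conventions. First, $\deg g$ must be the cohomological degree, and ${\bf m}(k,M^{2n})$ is a positive even integer, so the arithmetic above is valid. Second, the case $l=0$ needs separate attention: $g^0=1$ has degree $0<2n$, so $\langle 1,[M]\rangle=0$ automatically, provided $n\ge1$; this holds since $\dim M\ge 2k\ge 2$. If the paper defines the pairing through the localization formula rather than through fiber integration, the same degree count still applies. Each fixed-point contribution $g^l|_p/e_p$ is a rational function of degree $l\deg g-2n$, and the sum lies in $H^*(BT^k)$, which has no nonzero elements of negative degree. I expect no real obstacle beyond confirming this grading convention.
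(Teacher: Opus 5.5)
Your proposal is correct and is essentially the argument the paper intends. The paper gives no written proof and calls the corollary a direct consequence of Theorem~\ref{main thm1}; the same degree count appears in its proof of Corollary~\ref{sharp bound-1}: take a distinguishing polynomial $g$ with $\deg g={\bf m}(k,M^{2n})$, observe that $l\deg g<2n$ for $l<\chi(M^{2n})$ so each $\langle g^l,[M]\rangle$ vanishes for degree reasons, and apply Theorem~\ref{main thm1}. One small correction: a distinguishing polynomial need not be homogeneous (e.g.\ $\sum N^i\sigma_i$ from Lemma~\ref{polynomial}), so read $\deg g$ as the top degree. Every homogeneous component of $g^l$ then has degree at most $l\deg g<2n$, and your argument is unchanged.
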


\begin{rem}
  Since $2n$ and $\chi(M^{2n})$ are intrinsic to $M^{2n}$, this means that $\boldsymbol{m}(k,M^{2n})$ encodes the core information of the $T^k$-action. In addition,  Corollary~\ref{md-deg inequ} also implies that  if $M^{2n}$ does not bound equivariantly, then     \begin{equation}\label{important-inequ} {\bf m}(k, M^{2n})\chi(M^{2n})> 2n.
    \end{equation}
  This captures the fundamental interplay between
$\chi(M)$
 and the dimension of
$M$,  mediated by
${\bf m}(k, M^{2n})$.  Applying
the inequality~\eqref{important-inequ}   to Kosniowski's conjecture and its toric generalization,
we  settle the existence problem of a positive linear function in $n$ that furnishes a lower bound for $\chi(M^{2n})$  within the framework of Kosniowski’s conjecture (see Proposition~\ref{ex-lin-bound}).
\end{rem}

Under a suitable condition,  we obtain  a refined lower bound sharper than the inequality (\ref{important-inequ}), which is stated as follows.

\begin{cor}\label{sharp bound}
 Suppose   $M^{2n}$ is a  $2n$-dimensional unitary $T^k$-manifold with a finite fixed point set, satisfying further  that  $M^{2n}$ bounds non-equivariantly, but does not bound equivariantly. Then  \begin{equation}\label{sharp ineq}
   {\bf m}(k, M^{2n})\chi(M^{2n})>2n {}+{\bf m}(k, M^{2n}).\end{equation}
\end{cor}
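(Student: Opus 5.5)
We argue by contraposition, sharpening the proof of Corollary~\ref{md-deg inequ}. Let $m={\bf m}(k,M^{2n})$, $N=\chi(M^{2n})=|M^{T^k}|$, and fix a distinguishing polynomial $g$ with $\deg g=m$. Since $M$ does not bound equivariantly, Theorem~\ref{main thm1} gives some $l<N$ with $\langle g^l,[M]\rangle\neq 0$. The aim is to show that such an $l$ must satisfy $l\ge 1$ and $ml>2n$. Taking $l\le N-1$ then yields $m(N-1)>2n$, which is exactly $mN>2n+m$.

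First we dispose of $l=0$. Here $\langle g^0,[M]\rangle=\langle 1,[M]\rangle$. By the Atiyah--Bott--Berline--Vergne localization formula this is $\sum_{p}1/e_p$, where $e_p$ is the product of the tangent weights at $p$. This pushforward is zero for degree reasons when $n>0$. (If one prefers to regard it as the ordinary degree-zero number, it vanishes for $n>0$ anyway.) So $l\ge 1$.

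Next, for $1\le l$ with $ml<2n$, the class $g^l$ lies in $H^{ml}_{T^k}$. Integration over $M$ lowers degree by $2n$, so $\langle g^l,[M]\rangle\in H^{ml-2n}(BT^k)$, which vanishes in negative degree. Hence a nonzero value forces $ml\ge 2n$.

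The remaining case, and the main obstacle, is $ml=2n$. Then $\langle g^l,[M]\rangle$ is a constant. By naturality (restricting to the trivial group) it equals the ordinary characteristic number $\langle \bar g^l,[M]\rangle$, where $\bar g$ is obtained from $g$ by forgetting the action, i.e.\ replacing equivariant Chern classes by ordinary Chern classes. Any coefficients of $g$ lying in $H^*(BT^k)$ of positive degree restrict to zero, so only the ordinary Chern number survives. By hypothesis $M^{2n}$ bounds non-equivariantly, so all its ordinary Chern numbers vanish and $\langle g^l,[M]\rangle=0$.

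Combining the three cases, a nonzero $\langle g^l,[M]\rangle$ requires $ml>2n$. The point needing most care is the identification of the degree-zero equivariant number with a nonequivariant Chern number. This needs $g$ to be a polynomial in equivariant Chern classes with coefficients in $H^*(BT^k)$, as in Definition~\ref{dis-poly-notion}, so that setting the parameters to zero commutes with integration along $M$.
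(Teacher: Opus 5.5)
Your proposal is correct and follows essentially the same route as the paper. The paper assumes $\mathbf{m}(k,M^{2n})(\chi(M)-1)\le 2n$ and shows that every $\langle g^l,[M]\rangle$ with $l\le\chi(M)-1$ vanishes. This holds by degree when $\deg g^l<2n$, and in the degree-$2n$ case because those numbers are ordinary Chern numbers, which vanish since $M$ bounds non-equivariantly; the resulting contradiction with Theorem~\ref{main thm1} is just the contrapositive form of your argument.

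One small point: distinguishing polynomials such as the one built in Lemma~\ref{polynomial} need not be homogeneous. So ``$g^l\in H^{ml}_{T^k}$'' should be read as ``$g^l$ has components only in degrees $\le ml$,'' and your degree argument then applies componentwise.
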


\begin{rem}
In Section~\ref{apply-KC},  we apply Corollary \ref{sharp bound} to derive a  lower bound for $\chi(M^{2n})$ that improves the linear
 function $f(n)={n\over 2}$, the prime candidate predicted by Kosniowski, thus partially confirming Kosniowski's
conjecture and its toric generalization provided that a unitary \(T^k\)-manifold \(M^{2n}\) with a finite  fixed point set  bounds non-equivariantly but not equivariantly and satisfies  ${\bf m}(k, M^{2n})\leq 4$ (see Proposition~\ref{sharper bound}).
The proof of Corollary \ref{sharp bound} will be finished in Section~\ref{apply-KC}.
\end{rem}

In contrast to the complex setting with $T^k$-actions, the real counterpart equipped with \((\mathbb{Z}_2)^k\)-actions possesses much stronger structural properties: the top equivariant Stiefel–Whitney class alone is sufficient.

\begin{thm}\label{main thm2}
Let $M^n$ be an $n$-dimensional closed smooth $(\mathbb {Z}_2)^k$-manifold with a finite fixed point set $M^{(\mathbb {Z}_2)^k}$. Then $M^n$ bounds equivariantly if and only if
\[
\bigl\langle \bigl(w_n^{(\mathbb {Z}_2)^k}(TM)\bigr)^l,\,[M]\bigr\rangle = 0
\]
for all integers $l \le \bigl|M^{(\mathbb {Z}_2)^k}\bigr|$, where $w_n^{(\mathbb {Z}_2)^k}(TM)$ denotes the top equivariant Stiefel--Whitney class of the tangent bundle $TM$, and $[M]$ is the mod-$2$ fundamental homology class of $M$.
\end{thm}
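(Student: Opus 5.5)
The plan is to combine tom Dieck's theorem with the tom Dieck--Kosniowski--Stong localization formula and then run a Vandermonde-type argument over the fixed points. By tom Dieck's theorem, $M^n$ bounds equivariantly if and only if all its equivariant Stiefel--Whitney numbers vanish. Since the fixed points are isolated, this in turn holds if and only if the tangent representations at the fixed points cancel in pairs. Put differently, the fixed data $\sum_{p\in M^G}[\tau_pM]$ must vanish in the Conner--Floyd representation algebra $\mathcal Z_n((\mathbb Z_2)^k)$. The ``only if'' direction is trivial. For the converse, write $G=(\mathbb Z_2)^k$ and $H^*(BG;\mathbb Z_2)=\mathbb Z_2[t_1,\dots,t_k]$. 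At each fixed point $p$, the representation $\tau_pM$ splits into $n$ nontrivial irreducible real characters $\rho_{p,1},\dots,\rho_{p,n}\in \mathrm{Hom}(G,\mathbb Z_2)\setminus\{0\}$. These correspond to nonzero linear forms in $H^1(BG)$, and I will identify each $\rho_{p,i}$ with its linear form. Set $e_p=\prod_{i}\rho_{p,i}$. Localization then gives, for every symmetric polynomial $f$,
\[
\langle f(TM),[M]\rangle=\sum_{p\in M^G}\frac{f(\tau_pM)}{e_p}.
\]
In particular,
\[
\langle (w_n^G(TM))^l,[M]\rangle=\sum_{p}e_p^{\,l-1}.
\]

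Next I group the fixed points according to the value of $e_p$. Let $E_1,\dots,E_s$ be the distinct polynomials occurring among the $e_p$, and let $N_j$ be the number of fixed points $p$ with $e_p=E_j$, so that $s\le |M^G|=:N$. The hypothesis for $l=1,\dots,N$ reads
\[
\sum_j N_j E_j^{\,l-1}=0\quad\text{for } 0\le l-1\le N-1.
\]
The matrix $(E_j^{\,m})$ with $0\le m\le s-1$ is a Vandermonde matrix with determinant $\prod_{i<j}(E_i-E_j)\neq0$ in the integral domain $\mathbb Z_2[t_1,\dots,t_k]$. Hence $N_j\equiv0 \pmod 2$ for every $j$.

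The key remaining point is the following claim: a product of nonzero linear forms determines the multiset of those forms. This follows from unique factorization in the polynomial ring over $\mathbb Z_2$. Each linear form is irreducible, and distinct nonzero linear forms over $\mathbb Z_2$ are non-associate, because the only unit is $1$. Therefore $e_p=e_q$ implies $\tau_pM\cong\tau_qM$ as real $G$-representations, since a real representation of an elementary abelian $2$-group is determined by its multiset of characters. Consequently each isomorphism class of tangent representation occurs an even number of times. The fixed data thus vanishes in $\mathcal Z_n(G)$, and so every equivariant Stiefel--Whitney number $\sum_p f(\tau_pM)/e_p$ vanishes, because the terms cancel in pairs. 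By tom Dieck's theorem, $M$ bounds equivariantly.

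I expect the main obstacle to be justifying that vanishing of the fixed data (equivalently, of all equivariant Stiefel--Whitney numbers) implies equivariant boundary. This is exactly tom Dieck's theorem, or Stong's description of $\mathcal Z_*(G)$ for isolated fixed points, so I will cite it rather than reprove it. The Vandermonde step and the unique factorization step are elementary. One should note that only $l\le s$ is actually needed, and $s\le N$ always holds, so the range of $l$ in the statement suffices.
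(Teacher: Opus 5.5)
Your proposal is correct and follows essentially the same route as the paper. The tom Dieck--Kosniowski--Stong formula turns $\langle (w_n^{(\mathbb{Z}_2)^k}(TM))^l,[M]\rangle$ into the power sums $\sum_p e_p^{\,l-1}$, a Vandermonde argument (the paper's Lemma~\ref{equ}) forces the Euler classes to occur in pairs, and tom Dieck's theorem finishes; your explicit unique-factorization argument that $e_p=e_q$ forces $\tau_pM\cong\tau_qM$ is a useful addition, since the paper uses this fact only tacitly when it passes from paired Euler classes to the vanishing of all equivariant Stiefel--Whitney numbers.
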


\begin{rem}
Compared with the original requirement of computing all equivariant Stiefel--Whitney numbers, our criterion
greatly simplifies how to detect whether a  smooth closed $(\mathbb{Z}_2)^k$-manifold  with isolated fixed points is an equivariant  boundary. We also reveal that the paired distribution of equivariant Euler classes at fixed points is the essential geometric reason for this simplification, adding new interpretations to a classical theorem  of Stong~\cite{St}  in the case of isolated fixed points (see  Corollary~\ref{pair} and Remark~\ref{Stong}).

We see from  Theorems~\ref{main thm1} and~\ref{main thm2} that substantial distinctions emerge between the real and complex settings. This discrepancy originates chiefly from orientation variations among the normal  $T^k$-representations and the tangent $T^k$-representations at fixed points,  which explains why the top equivariant characteristic class plays a decisive role in the real setting but not generally in the complex setting. See Subsection~\ref{loc-for} for more details.

The  techniques employed throughout our proofs are mainly the  localization formulas and Vandermonde matrix arguments.

Finally, it is worth noting that our main results cannot be readily generalized to manifolds with infinite fixed point sets. The main difficulty is constructing distinguishing polynomials that discriminate all fixed-point components and their normal bundles, which requires both ordinary Chern classes of each component’s tangent bundle and equivariant Chern classes of its normal bundle. Nevertheless, for manifolds with  isolated fixed points, only the equivariant Chern classes of the tangent representation at each isolated fixed point are essentially relevant.
\end{rem}

This paper is organized as follows.  Section~\ref{review} reviews necessary preliminaries on  equivariant characteristic classes and their numbers, and the two localization formulas that form the foundation of our proofs.
Section~\ref{proof}
focuses on torus actions on unitary manifolds: we introduce an equivalence relation on fixed points, prove the existence of  distinguishing polynomials, establish our main bordism criterion (Theorem~\ref{main thm1}),  present an explicit counterexample demonstrating that the top equivariant Chern class alone fails to yield a complete invariant for \(n>1\), and analyze the special cases \(k=n-1, n\).
Section~\ref{proof2}
concerns closed smooth \((\mathbb{Z}_2)^k\)-manifolds. We prove the corresponding bordism criterion (Theorem~\ref{main thm2}) and explain the geometric origin of  the stronger simplification in the real setting.
Section~\ref{apply-KC} gives applications of our two key inequalities (\ref{important-inequ}) and (\ref{sharp ineq}) to  Kosniowski's conjecture and its toric generalization.

\section{Preliminaries}\label{review}

This paper mainly focuses on equivariant cohomology characteristic numbers and their associated techniques. Therefore, in this section, we only briefly recall the relevant definitions and known results (cf~\cite{tD1,    BV1, BV2, AB,  KS, AP,LW}).

\subsection{Equivariant cohomology  characteristic numbers}
\label{symm-Chern class}
A {\em unitary} manifold is defined as an oriented, connected, closed smooth manifold whose stable tangent bundle admits a complex structure. A unitary $T^k$-manifold is further required to allow  an effective action of $T^k$ that preserves this complex structure.
Let $M$ be a unitary $T^k$-manifold of dimension $2n$. Applying the Borel construction to
the  tangent bundle $TM$ over $M$ gives a vector bundle
$ET^k\times_{T^k} TM$ over
$ET^k\times_{T^k} M$, where $ET^k=(S^\infty)^k$ is the universal free $T^k$-space with orbit space
$BT^k=(\mathbb{C}P^\infty)^k$, the classifying space of $T^k$.
 Then the {\em total equivariant cohomology Chern class} of $TM$ is defined to be
the total cohomology Chern class of the vector bundle $ET^k\times_{T^k} TM$, i.e.,
\begin{align*}
c^{T^k}(TM):= c(ET^k\times_{T^k} TM)\in H^*(ET^k\times_{T^k} M)=H_{T^k}^*(M)
\end{align*}
which can formally be written as
$c^{T^k}(TM)=\prod_{i=1}^n(1+x_i)$ with $\deg x_j=2$ according to the splitting principle, so
the $i$-th equivariant cohomology Chern
class $c_i^{T^k}(TM)$ is  the $i$-th elementary symmetric function $\sigma_i(x_1, ..., x_n)$. For the definition of $\sigma_i$, see also ~\cite{MS1}.
\vskip .1cm

Now let $p_{!}^{T^k}: H_{T^k}^*(M)\longrightarrow H^{*-2n}(BT^k)$ be the equivariant Gysin map induced by the constant $p: M\longrightarrow {\rm pt}$. Then the {\em equivariant cohomology Chern numbers} of $M$ are defined to be
$$\langle c^{T^k}_\omega(TM), [M] \rangle: =p_{!}^{T^k}(c^{T^k}_\omega(TM)),
$$
which are polynomials in
$H^*(BT^k)=\mathbb{Z}[t_1, ..., t_k]$ with $\deg t_j=2$, where $\omega=(i_1, ..., i_u)$ is a partition of $|\omega|=i_1+\cdots+i_u$, and $c^{T^k}_\omega(TM)$ is the product $c_{i_1}^{T^k}(TM)\cdots c_{i_u}^{T^k}(TM)$.
More generally, for an arbitrarily symmetric polynomial function $f(x_1, ..., x_n)$,
$$\langle f(x_1, ..., x_n), [M] \rangle: =p_{!}^{T^k}(f(x_1, ..., x_n)).
$$

\vskip .1cm
It has been proved in~\cite{LW} that the $T^k$-equivariant cohomology Chern numbers form a full system of invariants of the $T^k$-equivariant unitary bordism.
\begin{thm}[{L\"u--Wang \cite{LW}}] \label{invariant1}
 Let $M$ be a unitary $T^k$-manifold. Then $M$ bounds equivariantly if and only if all   equivariant cohomology Chern numbers of $M$ vanish.
\end{thm}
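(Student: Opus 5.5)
The plan is to prove the two implications separately. The forward implication is formal. The converse reduces, via known injectivity results, to showing that the cohomology Chern numbers determine a complete $MU$-theoretic invariant.

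\emph{If $M$ bounds equivariantly, all Chern numbers vanish.} Suppose $M=\partial W$ for a unitary $T^k$-manifold $W$ with compatible stable complex structure, so that $TM\oplus\underline{\mathbb R}=TW|_M$. Then $c^{T^k}_\omega(TM)$ is the restriction of $c^{T^k}_\omega(TW)\in H^*_{T^k}(W)$. I would compute $p^{T^k}_!$ on finite approximations $E_N=(S^{2N+1})^k$. There $E_N\times_{T^k}W$ is a compact manifold with boundary $E_N\times_{T^k}M$, and it fibres over $B_N=(\mathbb CP^N)^k$. The fibrewise Gysin map of the restriction of a class from $W$ vanishes, by Stokes' theorem or the standard exact sequence $H^*(W)\to H^*(M)\to H^{*+1}(W,M)$, applied fibrewise. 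Letting $N\to\infty$ shows $p^{T^k}_!\bigl(c^{T^k}_\omega(TM)\bigr)=0$ in every degree.

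\emph{If all Chern numbers vanish, $M$ bounds.} I would route the argument through tom Dieck's theory. First, invoke tom Dieck's theorem (as completed by L\"offler and Comeza\~na) that, for torus groups, the map
\[
\Omega^{U,T^k}_*\longrightarrow MU^{T^k}_*\longrightarrow MU^*(BT^k)=MU_*[[u_1,\dots,u_k]]
\]
is injective. This map sends $[M]$ to its Borel-completed $MU$-characteristic numbers $p^{MU}_!\bigl(c^{MU}_\omega(ET^k\times_{T^k}TM)\bigr)$. Thus it suffices to show that these $MU$-theoretic numbers vanish. Second, apply the Boardman (Hurewicz) map $MU^*(BT^k)\to H^*(BT^k;H_*MU)$. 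This map is injective because $MU_*$ and $H^*(BT^k)$ are torsion free, and it is compatible with Gysin maps after twisting by the Todd-type class of the normal bundle. Under it, each $MU$-characteristic number becomes a $\mathbb Z[b_1,b_2,\dots]$-linear combination of the numbers $p^{T^k}_!$ of symmetric polynomials in the Chern roots of $ET^k\times_{T^k}TM$, i.e.\ of the equivariant cohomology Chern numbers. The coefficients are universal, coming from the series $\prod_i(1+b_1x_i+b_2x_i^2+\cdots)$. If all those numbers vanish, the images vanish, and injectivity then gives $[M]=0$.

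\emph{Main obstacle.} The deep input is the injectivity of $\Omega^{U,T^k}_*\to MU^*(BT^k)$. This requires comparing geometric and homotopical equivariant bordism, and the difficulty is that transversality fails equivariantly. I would rely on tom Dieck's localization argument and the Sinha/Comeza\~na computations. These show that geometric classes in the kernel of Borel completion are detected on fixed-point data and are hence zero. A more self-contained alternative in the isolated-fixed-point case would use the localization formula: the vanishing of all numbers determines the fixed-point tangent representations up to cancelling pairs, and a GKM-type surgery then builds a null-bordism.
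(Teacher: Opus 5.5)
The paper gives no proof of this statement. It is imported from \cite{LW} and used as a black box: it is what turns Lemma~\ref{zero} into a bounding criterion in the proof of Theorem~\ref{main thm1}. So there is no internal argument to compare with. Judged on its own, your outline is sound. The forward direction is the standard boundary argument on the approximations $E_N\times_{T^k}W$, and the converse is the natural route through tom Dieck's equivariant complex bordism. Compared with the bare citation, your route makes clear where the real content lies. The price is that it is only as strong as the injectivity theorem you quote, as explained next.

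Two points in the converse need tightening.

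First, the composite $\Omega^{U,T^k}_*\to MU^{T^k}_*\to MU^*(BT^k)$ sends $[M]$ to the single class $p^{MU}_!(1)$, not to the family $p^{MU}_!(c^{MU}_\omega)$. This is harmless. The Boardman image of $p^{MU}_!(1)$ is $p^{T^k}_!\bigl(\prod_{i=1}^n\beta(x_i)\bigr)$, where $\beta(x)=1+\beta_1x+\beta_2x^2+\cdots$ is either $1+b_1x+b_2x^2+\cdots$ or its inverse, depending on conventions. In either case the $\beta_j$ are polynomial generators of $H_*(MU)$. So the image equals $\sum_\omega \beta_\omega\, p^{T^k}_!\bigl(m_\omega(x_1,\dots,x_n)\bigr)$, where $m_\omega$ are the monomial symmetric polynomials.

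Second, the same formula shows your reduction is an equivalence. The monomials $\beta_\omega$ are $\mathbb{Z}$-independent, and the $m_\omega$ form a $\mathbb{Z}$-basis of the symmetric polynomials. Hence $p^{MU}_!(1)=0$ exactly when all equivariant cohomology Chern numbers vanish. The theorem is therefore \emph{equivalent} to injectivity of $[M]\mapsto p^{MU}_!(1)$, and all of its depth sits in your citation. You should state that citation precisely, as two separate facts with references:
\begin{enumerate}
\item geometric unitary $T^k$-bordism injects into homotopical $MU^{T^k}_*$;
\item $MU^{T^k}_*$ injects into its Borel completion $MU^*(BT^k)$.
\end{enumerate}
Attributing this loosely to ``tom Dieck as completed by L\"offler and Comeza\~na'' is not enough.

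Finally, your ``self-contained alternative'' does not avoid this input. In the isolated-fixed-point case, the Vandermonde argument of Theorem~\ref{main thm1} does turn vanishing numbers into $\varepsilon_{[p]}=0$ for every class. But turning signed cancellation of tangent representations into an actual equivariant null-bordism is a torus analogue of Stong's theorem, not a routine ``GKM-type surgery''. The paper itself obtains that step only through Theorem~\ref{invariant1}.
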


Similarly, for a smooth closed $(\mathbb{Z}_2)^k$-manifold $M$ of dimension $n$, one may define the {\em total equivariant Stiefel--Whitney
class} of $M$ as
\begin{align*}
 w^{(\mathbb{Z}_2)^k}(TM): =
w(E(\mathbb{Z}_2)^k\times_{(\mathbb{Z}_2)^k}TM) \in
H^*(E(\mathbb{Z}_2)^k\times_{(\mathbb{Z}_2)^k}M; \mathbb{Z}_2)=
H^*_{(\mathbb{Z}_2)^k}(M; \mathbb{Z}_2)
\end{align*}
which can formally be written as
$w^{(\mathbb{Z}_2)^k}(TM)=\prod_{i=1}^n(1+x_i)$ with $\deg x_j=1$, so that
the $i$-th equivariant Stiefel--Whitney
class $w_i^{(\mathbb{Z}_2)^k}(TM)=\sigma_i(x_1, ..., x_n)$,
where $E(\mathbb{Z}_2)^k=(S^\infty)^k$ is the universal free $(\mathbb{Z}_2)^k$-space with orbit space
$B(\mathbb{Z}_2)^k=(\mathbb{R}P^\infty)^k$, the classifying space of $(\mathbb{Z}_2)^k$.
Then the {\em equivariant Stiefel--Whitney numbers} of $M$ are defined to be
$$\langle w_\omega^{(\mathbb{Z}_2)^k}(TM), [M]\rangle:=p_{!}^{(\mathbb{Z}_2)^k}(w_\omega^{(\mathbb{Z}_2)^k}(TM))
$$
which are polynomials in
$H^*(B(\mathbb{Z}_2)^k; \mathbb{Z}_2)=\mathbb{Z}_2[a_1, ..., a_k]$ with $\deg a_j=1$,
 where $p_{!}^{(\mathbb{Z}_2)^k}:
 H^*_{(\mathbb{Z}_2)^k}(M; \mathbb{Z}_2)\longrightarrow
 H^{*-n}(B(\mathbb{Z}_2)^k; \mathbb{Z}_2)$ is the equivariant Gysin map induced by mapping $M$ to a point,
  $\omega=(i_1, ..., i_u)$  is a partition, and
$w_\omega^{(\mathbb{Z}_2)^k}(TM)=w_{i_1}^{(\mathbb{Z}_2)^k}(TM)\cdots  w_n^{(\mathbb{Z}_2)^k}(TM)$.

\vskip .1cm
tom Dieck showed in~\cite{tD1} that the equivariant Stiefel--Whitney  numbers of $M$ completely determine the equivariant unoriented bordism class of $M$.
\begin{thm}[tom Dieck~\cite{tD1}]\label{invariant2}
Let $M$ be  a smooth closed $(\mathbb{Z}_2)^k$-manifold. Then   $M$ bounds equivariantly if and only if all its equivariant Stiefel--Whitney numbers vanish.
\end{thm}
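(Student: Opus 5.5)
The plan is to split the statement into its two directions and reduce the hard one to Stong's injectivity of the fixed-data homomorphism, combined with the tom Dieck--Kosniowski--Stong localization formula. Throughout write $G=(\mathbb{Z}_2)^k$, $R=H^*(BG;\mathbb{Z}_2)=\mathbb{Z}_2[a_1,\dots,a_k]$, and let $S\subset R$ be the multiplicative set generated by the nonzero linear forms in the $a_j$.

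\emph{Necessity.} Suppose $M=\partial W$ equivariantly. The Borel construction of $TW$ restricts on $E G\times_G M$ to the Borel construction of $TM\oplus\underline{\mathbb{R}}$. Hence every $w^{G}_\omega(TM)$ is the restriction of the class $w^{G}_\omega(TW)$. The equivariant Gysin map of a boundary kills restrictions of classes from $W$; this is the equivariant form of the classical Pontryagin argument, obtained by approximating $EG$ by finite-dimensional $G$-manifolds and using $\partial[W]=[M]$. So all equivariant Stiefel--Whitney numbers vanish.

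\emph{Sufficiency, step 1 (reduction to fixed data).} By Stong's theorem, the map sending $[M]$ to the bordism class of its fixed data is injective. The fixed data is the disjoint union over fixed components $F$ of the normal bundles $\nu_F$, each split as $\bigoplus_{\rho\neq 0}\nu_{F,\rho}$ over the nontrivial characters $\rho\in\mathrm{Hom}(G,\mathbb{Z}_2)\cong\mathbb{Z}_2^k$. It is therefore enough to show that the vanishing of all $\langle w^G_\omega(TM),[M]\rangle$ forces every ordinary Stiefel--Whitney number of this fixed data to vanish. Such a number is a Stiefel--Whitney number of $F$ together with the bundles $\nu_{F,\rho}$; by Thom's theorem for bordism of manifolds with bundles, the vanishing of all of them makes the fixed data bound.

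\emph{Step 2 (localization).} Apply the tom Dieck--Kosniowski--Stong formula in $S^{-1}R$:
\[
\langle f(x_1,\dots,x_n),[M]\rangle=\sum_F \Bigl\langle \frac{f|_F}{e^G(\nu_F)},[F]\Bigr\rangle .
\]
Here $f|_F$ is computed from the splitting
\[
w^G(TM)|_F=w(TF)\prod_\rho\prod_{j}(1+\rho+y_{\rho,j}),
\]
with $y_{\rho,j}$ the Chern roots of $\nu_{F,\rho}$, and
\[
e^G(\nu_F)=\prod_\rho\prod_j(\rho+y_{\rho,j}).
\]

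\emph{Step 3, the main obstacle.} One must show that, as $f$ ranges over all symmetric polynomials, the left-hand sides determine the individual terms $\langle w_\alpha(TF)\prod_\rho w_{\beta_\rho}(\nu_{F,\rho}),[F]\rangle$.

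1. First I would allow $f$ to be any symmetric function of the $x_i$, not only a monomial in the $w_i$. I would then introduce formal variables $t$ and consider generating functions such as $\prod_i(1+t\,x_i+\cdots)$. This turns the family of numbers into a single identity in $S^{-1}R[[t,\dots]]$.

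2. Expanding $1/(\rho+y)=\rho^{-1}\sum_m (y/\rho)^m$, each summand becomes a Laurent polynomial in the characters $\rho$. Its coefficients are exactly the desired Stiefel--Whitney numbers of the fixed data.

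3. Next I would exploit that distinct nonzero characters are pairwise coprime linear forms in $R$. This gives partial-fraction uniqueness in $S^{-1}R$, so the contributions indexed by different representation types separate. The types are the multisets of $(\rho,\dim\nu_{F,\rho})$ occurring at the fixed components, and the separation comes from reading off the polar parts at each $\rho$.

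4. Within one representation type, the $\rho$-graded pieces of $f|_F$ can be varied independently by choosing $f$ suitably, using the $x_i$ with the weight $\rho$ attached. This recovers each mixed Stiefel--Whitney number.

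If this direct separation becomes unwieldy, my fallback is tom Dieck's route. It passes through homotopical equivariant bordism: one shows that the geometric-to-homotopical map is injective for $G=(\mathbb{Z}_2)^k$ and that $S^{-1}$ of homotopical bordism is detected by characteristic numbers, since it localizes to $\mathfrak{N}_*(\prod BO)$-type fixed data. That reduces the statement to the same comparison of coefficients.
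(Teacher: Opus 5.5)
The paper gives no proof of this theorem; it quotes it from tom Dieck. So your argument has to stand on its own, and it has a genuine gap at the decisive step.

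\textbf{What works.} Your necessity argument is sound: via the collar, the Borel numbers of a boundary vanish. Step~1 is also sound: Stong's injectivity of the fixed-data homomorphism, together with Thom's theorem for $\mathfrak{N}_*(\prod_\rho BO(n_\rho))$, reduces sufficiency to one claim. Namely, the numbers $\langle w^G_\omega(TM),[M]\rangle$ must determine all Stiefel--Whitney numbers of the fixed data, \emph{separately for each representation type}. But that claim is the whole content of tom Dieck's theorem.

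\textbf{Where Step~3 fails.} For $k\ge 2$ the $2^k-1$ nonzero characters satisfy linear relations. So a ``Laurent polynomial in the characters'' (item~2) has no well-defined coefficients. Likewise there is no partial-fraction uniqueness and no well-defined ``polar part at $\rho$'' in $S^{-1}R$ (item~3). With $a_3=a_1+a_2$ one has $\frac{1}{a_1a_2}=\frac{1}{a_1a_3}+\frac{1}{a_2a_3}$. Concretely, compare one fixed point with weights $\{a_1,a_2\}$ against two fixed points with weights $\{a_1,a_3\}$ and $\{a_2,a_3\}$. These are fixed data of different types, yet their localized contributions coincide for $f=1$ and for $f=\sigma_1$. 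Only further test functions, such as $\sigma_2$ (giving $1$ versus $0$), tell them apart. So types cannot be separated by inspecting values.

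Item~4's phrase ``using the $x_i$ with the weight $\rho$ attached'' has no meaning for symmetric polynomials with $\mathbb{Z}_2$-coefficients. Item~1 adds nothing either, since every such symmetric function is already a polynomial in the $w^G_i$. Your fallback explicitly reduces to the same comparison of coefficients, so it does not close the gap.

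\textbf{The missing idea.} You need test classes that themselves select roots by weight. Since $p^G_!$ is $R$-linear, you may use symmetric polynomials with coefficients in $R$, and, after clearing denominators, in $S^{-1}R$. Their numbers are still determined by the $\langle w^G_\omega(TM),[M]\rangle$.

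Take $2^r>n$. On a fixed component $F$, every restricted root $x$ satisfies $x^{2^r}\in\{0\}\cup\{\rho^{2^r}\}$ in $H^*(F)\otimes S^{-1}R$. This holds because $(\rho+y)^{2^r}=\rho^{2^r}+y^{2^r}$, and $y^{2^r}$, $z^{2^r}$ have degree exceeding $\dim F$. Moreover $\rho^{2^r}-\rho'^{2^r}=(\rho+\rho')^{2^r}$ is a unit in $S^{-1}R$. Lagrange interpolation therefore gives polynomials $\delta_\rho$ with $\delta_\rho(x^{2^r})$ equal to $1$ on roots of weight $\rho$ and $0$ on all other roots, where tangent roots count as weight $0$.

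Now set $f=\prod_i\bigl(\sum_\rho\delta_\rho(x_i^{2^r})h_\rho(x_i)\bigr)$, with formal parameters in
\[
h_\rho(x)=u_\rho\,x\,\bigl(1+\textstyle\sum_l s_{\rho,l}(x+\rho)^l\bigr)\ (\rho\neq 0),\qquad h_0(x)=1+\textstyle\sum_l s_{0,l}x^l .
\]
This yields
\[
f|_F/e^G(\nu_F)=\prod_\rho u_\rho^{n_\rho}\,\prod_z h_0(z)\,\prod_{\rho,j}\bigl(1+\textstyle\sum_l s_{\rho,l}y_{\rho,j}^l\bigr).
\]
Pairing with $[F]$, the monomials $\prod_\rho u_\rho^{n_\rho}$ separate the types, and the $s$-coefficients recover every fixed-data number. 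Some argument of this kind is required to make Step~3 a proof.
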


\subsection{Localization formulae}
\label{loc-for}
Localization formulae furnish an effective approach for computing equivariant characteristic numbers from fixed-point data. To establish our main theorems, we require the localization formulae for both $T^k$-actions and $(\mathbb{Z}_2)^k$-actions with only isolated fixed points.

\subsubsection{Atiyah--Bott--Berline--Vergne localization formula}
As a core computational tool for $T^k$-actions,  the Atiyah–Bott–Berline–Vergne localization formula was  established independently by Berline and Vergne \cite{BV1,BV2} in the framework of equivariant differential forms, and by Atiyah and Bott \cite{AB} within Borel equivariant cohomology. A complete unified treatment of the theorem can be found in the monograph \cite{BGV}.

Suppose that $M^{2n}$ is a $2n$-dimensional unitary $T^k$-manifold with isolated fixed points (i.e., its fixed point set $M^{T^k}$ is finite and nonempty).
Take a fixed point $p\in M^{T^k}$, its tangent space $T_pM$ is naturally  regarded as a faithful $T^k$-representation since the $T^k$-action on $M^{2n}$ has been assumed to be effective.
On the other hand, the normal bundle $\nu_p$ to $p$ in $M^{2n}$ is $T_pM$ with the orientation inherited from $M^{2n}$.
Thus, the equivariant Euler class $e^{T^n}(\nu_p)$ of this bundle is $\varepsilon_p e^{T^n}(T_pM)$, where $\varepsilon_p$ is equal to $+1$ if the orientation of $\nu_p$ agrees with the complex orientation on the stable tangent bundle $TM$ and $-1$ otherwise.
Since all nontrivial irreducible
$T^k$-representations are complex 1-dimensional, the total equivariant Chern class
$c^{T^k}(T_pM)$ can be written as
$$c^{T^k}(T_pM)=\prod_{i=1}^n(1+\lambda_{p, i})$$
where each $\lambda_{p,i}$ is an element of degree 2 in $H^*(BT^k)=\mathbb{Z}[t_1, ..., t_k]$. Thus, the equivariant Euler class $$e^{T^n}(T_pM)=c_n^{T^n}(T_pM)=\lambda_{p, 1}\cdots\lambda_{p,n}=\sigma_n(\lambda_{p, 1}, ...,\lambda_{p,n}).$$

\vskip 0.1
cm

As seen in Subsection~\ref{symm-Chern class}, when  we  formally write  the total class
$c^{T^n}(TM)$ as $\prod_{i=1}^n(1+x_i)$, any symmetric polynomial
$f(x_1, ..., x_n)$ becomes  a cohomology class produced by equivariant cohomology Chern classes $\sigma_j(x_1, ..., x_n)$ in $H^*_{T^k}(M)$. It is well-known (cf~\cite{AP}) that this symmetric polynomial
$f(x_1, ..., x_n)$ restricted to $p$ is exactly $f(\lambda_{p, 1}, ..., \lambda_{p,n})$, i.e., the homomorphism $i_p^*: H^*_{T^k}(M)\longrightarrow H^*_{T^k}(\{p\})=H^*(BT^k)$ induced by the inclusion $i_p: \{p\}\hookrightarrow M$ maps $f(x_1, ..., x_n)$ to $f(\lambda_{p, 1}, ..., \lambda_{p,n})$.
Now let us state the
Atiyah--Bott--Berline--Vergne  (ABBV)  localization formula  as
follows:

\begin{thm}[ABBV localization formula]\label{ABBV}
Let~$M^{2n}$ be a $2n$-dimensional unitary  $T^k$-manifold with a finite fixed point set $M^{T^k}$.  Then
$$\big\langle f(x_1, ..., x_n), [M]\big\rangle=\sum_{p\in M^{T^k}}{\varepsilon_pi_p^*(f(x_1, ..., x_n)\over e^{T^k}(T_pM)}
=\sum_{p\in M^{T^k}}{\varepsilon_pf(\lambda_{p, 1}, ..., \lambda_{p,n})\over \sigma_n(\lambda_{p, 1}, ...,\lambda_{p,n})}
\in H^*(BT^k).$$
\end{thm}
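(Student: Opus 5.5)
The plan is to follow the Atiyah--Bott approach in Borel equivariant cohomology, working over $\mathbb{Q}$ and then localizing. Let $S\subset H^*(BT^k;\mathbb{Q})=\mathbb{Q}[t_1,\dots,t_k]$ be the multiplicative set of nonzero homogeneous elements. Write $i:M^{T^k}\hookrightarrow M$ for the inclusion of the finite fixed set. The first step is the localization theorem: the restriction $i^*:S^{-1}H^*_{T^k}(M)\to S^{-1}H^*_{T^k}(M^{T^k})=\bigoplus_{p}S^{-1}H^*(BT^k)$ is an isomorphism. I would prove this via a Mayer--Vietoris / equivariant tubular neighbourhood argument on the complement $M\setminus M^{T^k}$. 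That complement is covered by finitely many $T^k$-invariant tubes $T^k\times_{H}D$ with proper stabilizers $H\subsetneq T^k$. For each such tube, $H^*_{T^k}$ is a module over $H^*(BH)$, so it is annihilated by some nonzero linear form vanishing on $\mathrm{Lie}(H)$. Hence the relative cohomology $H^*_{T^k}(M,M^{T^k})$ becomes $S$-torsion, and it vanishes after localization.

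The second step computes the composite $i_p^*\circ i_{p!}$ for each fixed point $p$, where $i_{p!}:H^*(BT^k)\to H^{*+2n}_{T^k}(M)$ is the equivariant Gysin map for the oriented normal bundle $\nu_p$. By the self-intersection formula, $i_p^*i_{p!}(1)=e^{T^k}(\nu_p)=\varepsilon_p\,e^{T^k}(T_pM)$, with the sign $\varepsilon_p$ exactly as defined before the statement. Also, $i_q^*i_{p!}=0$ for $q\ne p$, since the fixed points are distinct and $i_{p!}(1)$ is supported near $p$. The key input is that $e^{T^k}(T_pM)=\lambda_{p,1}\cdots\lambda_{p,n}$ is nonzero and hence lies in $S$. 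This holds because effectiveness and isolatedness force every weight $\lambda_{p,i}$ to be nonzero: a zero weight would give a fixed direction, contradicting isolatedness. Combining the two steps, every class $\alpha\in S^{-1}H^*_{T^k}(M)$ satisfies
\[
\alpha=\sum_{p\in M^{T^k}} i_{p!}\Bigl(\frac{i_p^*\alpha}{\varepsilon_p e^{T^k}(T_pM)}\Bigr).
\]
To verify this, I would apply $i^*$ to both sides and use injectivity of $i^*$ after localization.

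The third step applies $p_!^{T^k}$ and uses $p_!^{T^k}\circ i_{p!}=\mathrm{id}$ on $H^*(BT^k)$, by functoriality of Gysin maps for $\{p\}\to M\to\mathrm{pt}$. Since $\varepsilon_p=\pm1$, we have $1/\varepsilon_p=\varepsilon_p$, and this gives
\[
\langle \alpha,[M]\rangle=\sum_p \frac{\varepsilon_p\, i_p^*\alpha}{e^{T^k}(T_pM)}.
\]
The left side is a polynomial in $H^*(BT^k)$, so the identity holds in $H^*(BT^k)$ after clearing denominators. To finish, I take $\alpha=f(x_1,\dots,x_n)$. Restriction to $p$ commutes with Chern classes by naturality of the Borel construction, and $T_pM$ splits into the characters $\lambda_{p,i}$, so $i_p^*f(x_1,\dots,x_n)=f(\lambda_{p,1},\dots,\lambda_{p,n})$. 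The main obstacle is the first step, specifically the torsion argument for the complement of the fixed set. I would handle it by induction on a finite invariant cover using compactness and the slice theorem.

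A second delicate point is orientation bookkeeping. In a unitary, rather than almost complex, manifold, $T_pM$ carries a complex structure only stably, so its complex orientation can disagree with the orientation of $M$. I would track this by defining $i_{p!}$ using the orientation of $M$ and comparing it with the complex orientation. That comparison is exactly where the sign $\varepsilon_p$ enters, and no other sign appears.
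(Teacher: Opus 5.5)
Your proposal is correct and is essentially the Atiyah--Bott argument in Borel equivariant cohomology: the rational localization theorem, the self-intersection formula $i_p^*i_{p!}(1)=e^{T^k}(\nu_p)=\varepsilon_p\,e^{T^k}(T_pM)$, and then $p^{T^k}_!\circ i_{p!}=\mathrm{id}$; this is one of the routes the paper relies on, since it gives no proof of its own and simply quotes the formula from Atiyah--Bott, Berline--Vergne and Berline--Getzler--Vergne. Two small remarks: nonvanishing of the weights $\lambda_{p,i}$ needs only isolatedness of the fixed points (effectiveness plays no role), and the identity is an equality in the fraction field of $\mathbb{Z}[t_1,\dots,t_k]$, into which $H^*(BT^k)$ embeds, which is how the statement's ``$\in H^*(BT^k)$'' should be read.
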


\subsubsection{tom Dieck--Kosniowski--Stong localization formula}
In~\cite{KS},   Kosniowski and Stong gave the calculation formula of the equivariant Stiefel--Whitney numbers in terms of fixed-point  data for smooth closed $(\mathbb{Z}_2)^k$-manifolds,
which can also be induced from tom Dieck's  integrality theorem in \cite{tD1}, as shown in~\cite[Section 8]{LT2} for the case of $(\mathbb{Z}_2)^k$-actions fixing isolated points.

Let $M^n$ be an $n$-dimensional smooth closed $(\mathbb{Z}_2)^k$-manifold with a finite fixed point set $M^{(\mathbb{Z}_2)^k}$.
Similarly to the case of unitary $T^k$-manifolds, if
the total class $w^{(\mathbb{Z}_2)^k}(TM)$ is formally written as $\prod_{i=1}^n(1+x_i)$, then  an arbitrary  symmetric polynomial $f(x_1, ..., x_n)$ becomes a cohomology class produced by equivariant Stiefel--Whitney classes $\sigma_j(x_1, ..., x_n)$
in $H^*_{(\mathbb{Z}_2)^k}(M; \mathbb{Z}_2)$. Since all irreducible $(\mathbb{Z}_2)^k$-representations are 1-dimensional, take a fixed point $p\in M^{(\mathbb{Z}_2)^k}$,
we have that the total equivariant Stiefel--Whitney class
of the tangent representation $T_pM$ at $p$ can be written as
$\prod_{i=1}^n(1+\lambda_{p,i})$ where
each $\lambda_{p,i}$ is a nonzero element of degree one in $H^*(B(\mathbb{Z}_2)^k; \mathbb{Z}_2)$. Thus,
the equivariant Euler class of $T_pM$ is $$e^{(\mathbb{Z}_2)^k}(T_pM)=\prod_{i=1}^n\lambda_{p,i}=\sigma_n(\lambda_{p,1},..., \lambda_{p,n}).$$
We note that the equivariant Euler class $e^{(\mathbb{Z}_2)^k}(\nu_p)$ of the normal representation $\nu_p$ at $p$ is equal to $e^{(\mathbb{Z}_2)^k}(T_pM)$ because of the ignorance of orientation.

\begin{thm}[tom Dieck--Kosniowski--Stong localization formula]\label{DKS}
Suppose that $M^n$ is an $n$-dimensional smooth closed $(\mathbb{Z}_2)^k$-manifold with a finite  fixed point set $M^{(\mathbb{Z}_2)^k}$.  Then for any   symmetric polynomial $f(x_1, ..., x_n)$,
\begin{equation} \label{formula2}\big\langle f(x_1, ..., x_n), [M] \big\rangle
=\sum_{p\in M^{(\mathbb{Z}_2)^k}}
{f(\lambda_{p,1}, ..., \lambda_{p,n}) \over\sigma_n(\lambda_{p,1},..., \lambda_{p,n})}\in H^*(B(\mathbb{Z}_2)^k; \mathbb{Z}_2).\end{equation}
\end{thm}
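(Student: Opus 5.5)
The plan is to derive the formula from the Borel--Hsiang--Quillen localization theorem in mod $2$ equivariant cohomology, together with the standard relation between Gysin maps and restrictions. Write $G=(\mathbb Z_2)^k$ and $R=H^*(BG;\mathbb Z_2)=\mathbb Z_2[a_1,\dots,a_k]$. Let $S\subset R$ be the multiplicative set generated by all nonzero degree-one classes. Let $i\colon F=M^G\hookrightarrow M$ be the inclusion of the finite fixed set, with components $i_p$.

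First, at each isolated fixed point $p$ the tangent representation $T_pM$ has no trivial summand, since otherwise $p$ would not be isolated. So every $\lambda_{p,j}\ne 0$, and $e^G(\nu_p)=\prod_j\lambda_{p,j}\in S$ becomes a unit in $S^{-1}R$. Second, I invoke the localization theorem: the localized restriction $S^{-1}i^*\colon S^{-1}H^*_G(M;\mathbb Z_2)\to S^{-1}H^*_G(F;\mathbb Z_2)=\bigoplus_p S^{-1}R$ is an isomorphism. If I had to prove this step, I would use a finite $G$-CW structure on $M$ and Mayer--Vietoris induction over cells. The reduction is to the statement that $S^{-1}H^*_G(M,F)=0$. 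Each relative cell has orbit type $G/H$ with $H$ a proper subgroup, and $H^*_G(G/H)=H^*(BH)$ is annihilated by any nonzero linear form $a\in S$ restricting to zero on $H$. I expect this to be the main obstacle conceptually, though it is standard.

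Third, I use the self-intersection formula $i_p^*\,i_{p!}(y)=e^G(\nu_p)\,y$, together with $i_q^*\,i_{p!}=0$ for $q\ne p$. Now set $z=\sum_p i_{p!}\bigl(i_p^*x/e^G(\nu_p)\bigr)$ in the localized group. Then $z$ has the same restriction to $F$ as $x$, so injectivity of the localized restriction gives
\[
x=\sum_{p} i_{p!}\Bigl(\frac{i_p^*x}{e^G(\nu_p)}\Bigr)\quad\text{in } S^{-1}H^*_G(M;\mathbb Z_2).
\]
Applying $S^{-1}p^G_!$ and using functoriality $p^G_!\circ i_{p!}=\mathrm{id}_R$ gives
\[
p^G_!(x)=\sum_p \frac{i_p^*x}{e^G(\nu_p)}.
\]
Since $p^G_!$ already maps $H^*_G(M;\mathbb Z_2)$ into $R$, and $R\to S^{-1}R$ is injective because $R$ is a domain, this identity holds in $R$.

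Finally, I specialize to $x=f(x_1,\dots,x_n)$. By naturality of equivariant Stiefel--Whitney classes, $i_p^*f(x_1,\dots,x_n)=f(\lambda_{p,1},\dots,\lambda_{p,n})$. Over $\mathbb Z_2$ no orientation sign appears, so $e^G(\nu_p)=e^G(T_pM)=\sigma_n(\lambda_{p,1},\dots,\lambda_{p,n})$. Substituting these into the previous identity gives \eqref{formula2}.
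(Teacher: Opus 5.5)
Your argument is correct. It is not the paper's route, though, since the paper gives no proof of its own. It quotes Kosniowski--Stong and refers to Section~8 of L\"u--Tan, where the formula is extracted from tom Dieck's integrality theorem in equivariant unoriented bordism.

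You instead run the Atiyah--Bott argument in mod~$2$ Borel cohomology. Its ingredients are:
\begin{itemize}
\item the Borel--Hsiang--Quillen localization theorem, where inverting degree-one classes suffices for $(\mathbb{Z}_2)^k$ because every proper subgroup $H$ lies in the kernel of a nontrivial character whose class restricts to zero in $H^*(BH;\mathbb{Z}_2)$;
\item the self-intersection formula $i_p^*i_{p!}=e^G(\nu_p)\cdot(-)$;
\item $R$-linearity and functoriality of the Gysin maps;
\item injectivity of $R\to S^{-1}R$.
\end{itemize}

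What your route buys is a self-contained proof that is formally identical to the standard proof of the ABBV formula (Theorem~\ref{ABBV}). Both localization formulas used in the paper then rest on one mechanism. The absence of orientation signs over $\mathbb{Z}_2$ shows up simply as $e^G(\nu_p)=e^G(T_pM)$, and the argument adapts readily to non-isolated fixed sets.

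The integrality route works in the bordism ring and yields more, because tom Dieck's theorem is two-sided. Polynomiality of every sum $\sum_p f(\lambda_{p,1},\dots,\lambda_{p,n})/\sigma_n(\lambda_{p,1},\dots,\lambda_{p,n})$ characterizes exactly which collections of tangent representations arise as fixed data of closed $(\mathbb{Z}_2)^k$-manifolds. Your cohomological argument does not give this realizability converse. That route also sits naturally beside Theorem~\ref{invariant2}, which the paper likewise takes from tom Dieck.
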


\begin{rem}\label{deg<2n}
It should be pointed out that in Theorems~\ref{ABBV} and~\ref{DKS}, if $\deg f< \dim M$, then  $\big\langle f(x_1, ..., x_n), [M] \big\rangle
=0$.
\end{rem}

\subsection{The case of rank $k=1$}
\begin{cor} \label{even}
Let $n$ be an integer more than zero. If    an
  $n$-dimensional smooth closed $\mathbb{Z}_2$-manifold $M$ has only isolated fixed points, then the number of isolated fixed points must be even, and $M$ bounds equivariantly.
\end{cor}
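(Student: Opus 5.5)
For $k=1$ we have $H^*(B\mathbb{Z}_2;\mathbb{Z}_2)=\mathbb{Z}_2[a]$ with $\deg a=1$. Since $a$ is the only nonzero element of degree one, every weight $\lambda_{p,i}$ at every fixed point $p$ equals $a$. So all fixed points carry the same tangent representation: $T_pM$ is $n$ copies of the sign representation. The plan is to apply the tom Dieck--Kosniowski--Stong formula (Theorem~\ref{DKS}) and then Theorem~\ref{invariant2}.

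\textbf{Parity of the number of fixed points.} Let $N=|M^{\mathbb{Z}_2}|$ and take $f=\sigma_n(x_1,\dots,x_n)=w_n^{\mathbb{Z}_2}(TM)$. This $f$ has degree $n=\dim M$, so its equivariant characteristic number lies in $H^0(B\mathbb{Z}_2;\mathbb{Z}_2)=\mathbb{Z}_2$. Formula~\eqref{formula2} gives
\[
\langle \sigma_n(x_1,\dots,x_n),[M]\rangle=\sum_{p}\frac{a^n}{a^n}=N \bmod 2 .
\]
The left-hand side is the degree-zero part of an equivariant number. Restricting along $\mathrm{pt}\to B\mathbb{Z}_2$, i.e.\ forgetting the action, identifies it with the ordinary Stiefel--Whitney number $w_n[M]$, which is the mod $2$ Euler characteristic.

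This gives $N\equiv \chi(M)\pmod 2$, but it does not yet force $N$ to be even. For that I would use the localization formula itself, namely that the right side of \eqref{formula2} must be a polynomial for every symmetric $f$. Take $f=1$, which has degree $0<n$. By Remark~\ref{deg<2n} its number vanishes, and \eqref{formula2} gives
\[
\sum_p \frac{1}{a^n}=\frac{N}{a^n}=0 \quad\text{in } \mathbb{Z}_2[a,a^{-1}].
\]
Hence $N\equiv 0\pmod 2$. This step needs $n>0$, which is exactly why the hypothesis $n>0$ is imposed.

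\textbf{Equivariant bounding.} Now let $f$ be any symmetric polynomial. Each summand of \eqref{formula2} equals $f(a,\dots,a)/a^n$, and this quantity is the same for every fixed point. The sum is therefore $N\cdot f(a,\dots,a)/a^n=0$, because $N$ is even. All equivariant Stiefel--Whitney numbers vanish, so $M$ bounds equivariantly by Theorem~\ref{invariant2}.

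The only delicate point is the parity step, and it is handled by evaluating $f=1$ in degree below $\dim M$. One should also note that if $N=0$ the conclusion still holds trivially.
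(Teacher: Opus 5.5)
Your proof is correct and follows essentially the same route as the paper: all weights equal $a$, so setting $f=1$ in \eqref{formula2} forces $|M^{\mathbb{Z}_2}|$ to be even. After that, every equivariant Stiefel--Whitney number vanishes and Theorem~\ref{invariant2} applies. Your preliminary computation with $f=\sigma_n$, which relates the fixed-point count to $\chi(M)$ mod $2$, is valid but not needed for the argument.
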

\begin{proof}
Since \(H^\ast(B\mathbb {Z}_2;\mathbb {Z}_2)=\mathbb {Z}_2[a]\) is a polynomial algebra generated by a single degree-1 element $a$, the equivariant Euler class \(\sigma_n(\lambda_{p,1},\dots,\lambda_{p,n})\) of \(T_pM\) at each fixed point \(p\in M^{\mathbb {Z}_2}\) coincides with \(a^n\). Consequently, \(f(\lambda_{p,1},\dots,\lambda_{p,n})=f(\lambda_{q,1},\dots,\lambda_{q,n})\) for any two fixed points \(p,q\in M^{\mathbb {Z}_2}\)  in the formula \eqref{formula2}. Setting \(f(x_1,\dots,x_n)=1\) in \eqref{formula2} yields that \(|M^{\mathbb {Z}_2}|\) is even. Moreover,
\(\big\langle f(x_1,\dots,x_n),[M]\big\rangle=0\label{formula-2}\)
holds for any symmetric polynomial \(f(x_1,\dots,x_n)\), which implies that all equivariant Stiefel–Whitney numbers of $M$ vanish. By Theorem \ref{invariant2}, so $M$ bounds equivariantly.
\end{proof}

\begin{rem} \label{exk=1}
Corollary \ref{even} is a special case of the classical results due to Conner and Floyd (see \cite[Theorems 25.1, 25.2]{CF}), for which we provide an alternative proof using the localization formula here.

However, in contrast to Corollary \ref{even}, a unitary \(S^1\)-manifold $M$ 
can exactly have an odd number of isolated fixed points
and hence need not bound equivariantly  in general.
By \cite[Theorem IV.10.5]{B} and \cite[Lemma 7.4.3]{BP1}, abundant nonbounding examples with odd isolated fixed points can be constructed from (omnioriented) quasitoric manifolds, the topological versions of toric varieties,   introduced by Davis and Januszkiewicz \cite{DJ}. Notably, $M$ may still fail to bound equivariantly even when $|M^{S^1}|$ is even.
A canonical example is $\mathbb{C}P^1$ with the standard $S^1$-action, which has precisely two fixed points and is nonbounding.
Consequently, the classification of nonbounding unitary \(S^1\)-manifolds with finite fixed points is considerably intricate. This is also seen from Kosniowski’s conjecture.
\end{rem}

\section{Equivariant bordism for unitary \(T^k\)-manifolds
}\label{proof}

Let~$M^{2n}$ be a $2n$-dimensional unitary  $T^k$-manifold with a finite fixed point set $M^{T^k}$.
Take a point \(p\in M^{T^k}\). By the preliminaries in Section \ref{review}, the equivariant Euler class of the normal \(T^k\)-representation \(\nu_p\) and the top equivariant Chern class of  tangent representation \(T_pM\) at $p$ satisfy
$$e^{T^k}(\nu_p)=\varepsilon_p c_n^{T^k}(T_pM)=\varepsilon_p\prod_{i=1}^n\lambda_{p,i}\in H^\ast(BT^k)=\mathbb Z[t_1,\dots,t_k],$$
in which \(\varepsilon_p=\pm1\) is determined by the relative orientations of \(\nu_p\) and \(T_pM\).

\begin{defn}
 Define an equivalence relation $\sim$ on $M^{T^k}$ in such a way:
$$p\sim q \Longleftrightarrow
c^{T^k}(T_pM)= c^{T^k}(T_qM).$$
\end{defn}
Take an equivalence class $[p]$ in the quotient set $M^{T^k}/\sim$, we have the sum $\sum_{q\in[p]}\varepsilon_q$ of signs $\varepsilon_q, q\in[p]$, denoted by $\varepsilon_{[p]}$.
We note that if $c^{T^k}(T_pM)= c^{T^k}(T_qM)$, then  $\{\lambda_{p,1}, ..., \lambda_{p,n}\}=\{\lambda_{q,1}, ..., \lambda_{q,n}\}$. 
Furthermore,  the ABBV localization formula in Theorem~\ref{ABBV} becomes
\begin{align} \label{ABBV-1}
   \big\langle f(x_1, ..., x_n), [M]\big\rangle
=\sum_{[p]\in M^{T^k}/\sim}
{\varepsilon_{[p]}f(\lambda_{p, 1}, ..., \lambda_{p,n})\over \sigma_n(\lambda_{p, 1}, ...,\lambda_{p,n})}\end{align}
from which we easily see the following lemma.
\begin{lem}\label{zero}
 If $\varepsilon_{[p]}=0$  for all $[p]\in M^{T^k}/\sim$, then all equivariant cohomology Chern numbers of $M$ vanish.
\end{lem}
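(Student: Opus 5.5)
The plan is to read the lemma directly off the regrouped localization formula \eqref{ABBV-1}. Fix an arbitrary partition $\omega=(i_1,\dots,i_u)$ and set $f=c^{T^k}_\omega=\sigma_{i_1}\cdots\sigma_{i_u}$, which is a symmetric polynomial in $x_1,\dots,x_n$. Theorem~\ref{ABBV} then gives
\[
\langle c^{T^k}_\omega(TM),[M]\rangle=\sum_{p\in M^{T^k}}\frac{\varepsilon_p f(\lambda_{p,1},\dots,\lambda_{p,n})}{\sigma_n(\lambda_{p,1},\dots,\lambda_{p,n})}.
\]

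The one point to check is that grouping this sum by equivalence classes is legitimate. Suppose $q\sim p$. Then $c^{T^k}(T_qM)=c^{T^k}(T_pM)$, so the elementary symmetric functions of the $\lambda_{q,i}$ and of the $\lambda_{p,i}$ coincide. Consequently both the numerator $f(\lambda_{q,\cdot})$ and the denominator $\sigma_n(\lambda_{q,\cdot})$ equal the corresponding quantities at $p$. Note that $\sigma_n(\lambda_{p,\cdot})\neq0$, because $p$ is isolated and so no weight vanishes.

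We therefore collect the terms with $q\in[p]$. Their numerators combine to $\bigl(\sum_{q\in[p]}\varepsilon_q\bigr)f(\lambda_{p,\cdot})=\varepsilon_{[p]}f(\lambda_{p,\cdot})$, which is exactly \eqref{ABBV-1}. The computation takes place in the fraction field of $\mathbb{Z}[t_1,\dots,t_k]$, where the total sum is known to be a polynomial.

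Under the hypothesis $\varepsilon_{[p]}=0$ for every class, each grouped summand vanishes, so $\langle c^{T^k}_\omega(TM),[M]\rangle=0$. Since $\omega$ was arbitrary, all equivariant cohomology Chern numbers vanish. By Theorem~\ref{invariant1} this also means $M$ bounds equivariantly, though that consequence is not required here.

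There is no real obstacle. The only subtlety is the well-definedness of the grouped term, which rests on $\sim$ being defined through the full total Chern class.
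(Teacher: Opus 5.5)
Your proposal is correct and follows the paper's argument: the paper groups the ABBV sum by $\sim$-classes to obtain \eqref{ABBV-1} and reads off the vanishing of every Chern number when all $\varepsilon_{[p]}=0$. Your additional checks — that the grouped term is well defined because $\sim$ uses the full total Chern class, and that $\sigma_n(\lambda_{p,1},\dots,\lambda_{p,n})\neq 0$ — only make explicit what the paper leaves implicit.
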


Formula \eqref{ABBV-1} constitutes the central tool in our proof of Theorem 1.1. To this end, we require  symmetric polynomials
capable of distinguishing different equivalence classes  in $M^{T^k}/\sim$.

\begin{defn} \label{dis-poly-notion}
    A symmetric polynomial \(g(x_1,\dots,x_n)\) in the elementary symmetric polynomials \(\sigma_1(x_1,\dots,x_n),\dots,\sigma_n(x_1,\dots,x_n)\) (i.e., a polynomial of equivariant Chern classes) is called a \textit{distinguishing polynomial} for \(M^{T^k}\) if for any two distinct equivalence classes \([p],[q]\in M^{T^k}/{\sim}\),
\begin{equation}\label{inequ}
g(\lambda_{p,1},\dots,\lambda_{p,n})\neq g(\lambda_{q,1},\dots,\lambda_{q,n})
\end{equation}
in $H^{*}(BT^k)$.

\end{defn}

The following lemma guarantees the existence of such a distinguishing polynomial.

\begin{lem}\label{polynomial}
 There always exists a distinguishing polynomial g that can be written as a linear combination of $\sigma_1,\dots,\sigma_n$.
\end{lem}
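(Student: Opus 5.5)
We need $g=\sum_{j=1}^n a_j\sigma_j$ with real (or integer) coefficients $a_j$ separating the finitely many equivalence classes in $M^{T^k}/\sim$. The plan is to work one degree at a time using the integrality of $H^*(BT^k)=\mathbb Z[t_1,\dots,t_k]$, then combine the degrees. Since $\sigma_j$ evaluated at a fixed point is homogeneous of degree $2j$, a linear combination $\sum a_j\sigma_j$ evaluated at $p$ is $\sum_j a_j\sigma_j(\lambda_p)$, whose degree-$2j$ homogeneous component is exactly $a_j\sigma_j(\lambda_p)$. Equality of $g$ at $p$ and $q$ in the graded ring therefore means $a_j\sigma_j(\lambda_p)=a_j\sigma_j(\lambda_q)$ for every $j$.

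First, if $[p]\neq[q]$, then $c^{T^k}(T_pM)\neq c^{T^k}(T_qM)$. Since $c^{T^k}(T_pM)=1+\sum_j\sigma_j(\lambda_{p,1},\dots,\lambda_{p,n})$, there is some $j$ with $\sigma_j(\lambda_p)\neq\sigma_j(\lambda_q)$.

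Second, take $g=\sigma_1+\sigma_2+\cdots+\sigma_n$, so that all $a_j=1$. Then $g(\lambda_p)-g(\lambda_q)=\sum_j(\sigma_j(\lambda_p)-\sigma_j(\lambda_q))$ is a sum of homogeneous pieces of distinct degrees. This sum vanishes in $\mathbb Z[t_1,\dots,t_k]$ if and only if each piece vanishes, i.e. if and only if $c^{T^k}(T_pM)=c^{T^k}(T_qM)$. Hence $g$ distinguishes all classes, and it is indeed a linear combination of $\sigma_1,\dots,\sigma_n$.

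The only point requiring care is the interpretation of "$\neq$ in $H^*(BT^k)$" for inhomogeneous elements. Here $H^*(BT^k)$ is a direct sum of its graded pieces, so the comparison is componentwise and the argument above applies.

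If the paper intends $g$ to be homogeneous, the plan changes: in degree $2n$ we would use $\sum_j a_j\sigma_j\sigma_1^{n-j}$ or similar. For a purely linear combination one could not be homogeneous, so I expect inhomogeneous $g$ is permitted, consistent with later remarks that $\deg g$ is an even positive integer, taken as the top degree.

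If one wishes to avoid relying on the grading, a fallback is to use generic coefficients. For each pair of distinct classes, the set of $(a_1,\dots,a_n)\in\mathbb Z^n$ with $\sum_j a_j(\sigma_j(\lambda_p)-\sigma_j(\lambda_q))=0$ is a proper subgroup, because some difference is nonzero. A finite union of proper subgroups, equivalently of proper linear subspaces after tensoring with $\mathbb Q$, cannot exhaust $\mathbb Z^n$, so a choice of $a$ avoiding all of them exists. This avoidance step is the only potential obstacle, and it is standard.
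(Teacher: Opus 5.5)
Your main argument is correct, and it takes a genuinely different and much shorter route than the paper.

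\textbf{Your route.} You use the grading of $H^*(BT^k)=\mathbb{Z}[t_1,\dots,t_k]$. Since $\sigma_j(\lambda_{p,1},\dots,\lambda_{p,n})$ is homogeneous of degree $2j$, the polynomial $g=\sigma_1+\cdots+\sigma_n$ evaluates at $p$ to $c^{T^k}(T_pM)-1$. So it separates the classes tautologically.

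\textbf{The paper's route.} The paper never invokes the grading. For each pair $[p]\neq[q]$ it takes the first index $r(p,q)$ at which the $\sigma_j$ differ. By induction on $k$ it finds an integer point $s\in\mathbb{Z}^k$ where the product $F=\prod\Delta(p,q)$ of these differences does not vanish. It then takes $g=\sum_{i=1}^{\mathbf r}N^i\sigma_i$ with $N=2\ell+2$, large enough that the shifted specialized values $\sigma_i(\widetilde\lambda_{p,1},\dots,\widetilde\lambda_{p,n})+\ell+1$ are base-$N$ digits. Uniqueness of base-$N$ expansions makes the integers $g(\widetilde\lambda_p)$ distinct, hence the polynomials $g(\lambda_p)$ are distinct.

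\textbf{Comparison.} The paper's construction buys a stronger property: separation survives the specialization $t\mapsto s$. But the Vandermonde step in Theorem~\ref{main thm1} only needs distinctness in the integral domain $\mathbb{Z}[t_1,\dots,t_k]$. The paper itself compares graded components in the proof of Proposition~\ref{gn=k}, so your reading of ``$\neq$ in $H^*(BT^k)$'' agrees with the paper's conventions.

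Your approach also recovers the paper's degree. The truncation $\sigma_1+\cdots+\sigma_{\mathbf r}$ already separates. Conversely, a polynomial of degree $<2\mathbf r$ in the $\sigma_j$ involves only $\sigma_1,\dots,\sigma_{\mathbf r-1}$, so it cannot separate a pair with $r(p,q)=\mathbf r$. Hence, when there are at least two classes, the grading even gives ${\bf m}(k,M^{2n})=2\mathbf r$ exactly.

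\textbf{Your fallback.} It is closer in spirit to the paper's generic-coefficient idea. Its stated principle is false for arbitrary proper subgroups: three index-$2$ subgroups cover $\mathbb{Z}^2$. It holds here only because each kernel comes from a map to the torsion-free group $\mathbb{Z}[t_1,\dots,t_k]$, so it is exactly the set of lattice points of a proper $\mathbb{Q}$-subspace. Your parenthetical ``after tensoring with $\mathbb{Q}$'' needs to carry that meaning.
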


\begin{proof}
 For two different classes $[p]\not=[q]$  in $M^{T^k}/\sim$, we have that
 $ c^{T^k}(T_pM)=\prod_{i=1}^n(1+\lambda_{p,i})\not=\prod_{i=1}^n(1+\lambda_{q,i})=c^{T^k}(T_qM) $. So there must exist some integer $r$ with $1 \leq r \leq n$ such that
$$\sigma_{r}(\lambda_{p,1}, ..., \lambda_{p,n}) \neq \sigma_{r}(\lambda_{q,1}, ..., \lambda_{q,n}).$$
Let
$r(p,q)= \min \left\{ 1 \leq r \leq n\ \big|\ \sigma_r(\lambda_{p,1}, ..., \lambda_{p,n}) \neq \sigma_r(\lambda_{q,1}, ..., \lambda_{q,n})\right\}$ and set
$$\Delta (p, q) = \sigma_{r(p,q)}(\lambda_{p,1}, ..., \lambda_{p,n}) - \sigma_{r(p,q)}(\lambda_{q,1}, ..., \lambda_{q,n}).$$
Then $\Delta(p,q) \neq 0.$

On the other hand, since each $\lambda _{p,i}$ can be written as a linear combination  $\sum\limits_{j = 1}^k {{\lambda _{p,i,j}}{t_j}}$ in $H^*(BT^k)=\mathbb{Z}[t_1, ..., t_k]$,
$\Delta(p,q)$ becomes a nonzero polynomial function in
$\mathbb{Z}[t_1, ..., t_k]$, so is
$$F(t_1, ..., t_k)
=\prod_{[p]\not=[q]}\Delta(p,q).$$
Regarding $F(t_1,t_2,\dots,t_k)$ as a polynomial function on $\mathbb{R}^k$, we claim that there exists $(s_1,s_2,\dots,s_k) \in \mathbb{Z}^k$ such that $F(s_1,s_2,\dots,s_k) \neq 0$. Next,  we prove this claim by induction on the number  $k$, the rank of $T^k$.

When $k=1$, by the  Fundamental Theorem of Algebra, a unary polynomial $F(t_1)$ has at most finite roots in $\mathbb{R}$, so there must exist an integer $s_1 \in \mathbb{Z}$ such that $F(s_1) \neq 0$.

Assume that the claim holds for $k=l\geq 1$, and consider the case $k=l+1$. Write

$$F(t_1,t_2,...,t_{l+1}) = \sum\limits_{{\rm{i}} = 0}^d {G_i\left( {{t_1},{t_2},...,{t_l}} \right)t_{l + 1}^{h_i}} $$
where each  $G_i(t_1,t_2,\dots,t_l)$ is a nonzero   polynomials in $l$ variables, and $0\leq h_0<h_1<\cdots <h_d$. By  induction hypothesis, there exists $(s_1,s_2,\dots,s_l) \in \mathbb{Z}^l$ such that $G_d(s_1,s_2,\dots,s_l)$  $ \neq 0$. Thus $F(s_1,s_2,\dots,s_l,t_{l+1}) = f(t_{l+1})$ is a nonzero unary polynomial of degree $h_d$ in the variable $t_{l+1}$. Furthermore,  there exists $s_{l+1} \in \mathbb{Z}$ such that $f(s_{l+1}) = F(s_1,s_2,\dots,s_l,s_{l+1}) \neq 0$. This completes the  induction.

Now write  \(\widetilde{\lambda}_{p,i}=\sum_{j=1}^k\lambda_{p,i,j}s_j\) for each class  \([p]\in M^{T^k}/\sim\), and let
\[\mathbf{r}=\max_{[p]\not=[q]}\{r(p,q)\}.\]
We further set
\[\ell = \max_{1\leq i\leq \mathbf{r}} \left\{ \big|\sigma_i(\widetilde{\lambda}_{p,1}, \dots, \widetilde{\lambda}_{p,n})\big| \,\bigg|\, [p]\in M^{T^k}/\sim \right\} \in \mathbb{Z},\]
and let \(N = 2\ell+2 \in \mathbb{Z}\). Choose
\begin{equation}\label{distin-poly}
g(x_1, ..., x_n)= \sum_{i=1}^{\bf r} N^{i}\sigma_i(x_1, ..., x_n).
\end{equation}

Now let us show that $g$ is exactly the required symmetric polynomial. It suffices to prove that  for any two different classes $[p], [q]\in M^{T^k}/\sim$,
$$g'(\widetilde{\lambda}_{p,1}, ..., \widetilde{\lambda}_{p,n})\not=g'(\widetilde{\lambda}_{q,1}, ..., \widetilde{\lambda}_{q,n})$$
where $g'(x_1, ..., x_n)=g(x_1, ..., x_n)+(\ell+1)\sum_{i=1}^{\bf r}N^i$.
In fact, it is obvious that $$\sigma_i\left(\widetilde{\lambda}_{p,1}, ..., \widetilde{\lambda}_{p,n}\right) + \ell+1, \ \sigma_i\left(\widetilde{\lambda}_{q,1}, ..., \widetilde{\lambda}_{q,n}\right) + \ell+1 \in (0, N).$$
Since
$$\sigma_{r(p,q)}\left(\widetilde{\lambda}_{p,1}, ..., \widetilde{\lambda}_{p,n}\right)
\not= \sigma_{r(p,q)}\left(\widetilde{\lambda}_{q,1}, ..., \widetilde{\lambda}_{q,n}\right)$$
for all $r(p,q)\leq {\bf r}$ by the above claim, it follows immediately from the Uniqueness Theorem for $N$-adic representations~\cite[Theorem 136]{HDW}  that $g'(\widetilde{\lambda}_{p,1}, ..., \widetilde{\lambda}_{p,n})\not=g'(\widetilde{\lambda}_{q,1}, ..., \widetilde{\lambda}_{q,n})$, as desired.
\end{proof}


Now we are going to finish the proof of Theorem~\ref{main thm1}.

\begin{proof}[Proof of Theorem~\ref{main thm1}]
If $M$ bounds equivariantly, then it follows immediately from  Theorem~\ref{invariant1} that for any equivariant cohomology class $g$ in equivariant Chern classes,
$$\big\langle g^l, [M]\big\rangle=0$$ for $l\geq 0$.

Conversely, 
by Lemma~\ref{polynomial}, there exists a distinguishing polynomial $g(x_1, ..., x_n)$ 
satisfying  that
for any two different classes $[p], [q]\in M^{T^k}/\sim$,
$$g(\lambda_{p,1}, ..., \lambda_{p,n})\not=g(\lambda_{q,1}, ..., \lambda_{q,n})$$
in $H^*(BT^k)$.
Assume that
$\big\langle g(x_1, ..., x_n)^l, [M]\big\rangle=0$ for all $ l< |M^{T^k}|$.
Then it follows from the formula~\eqref{ABBV-1} and Remark~\ref{deg<2n} that
for all $ l< |M^{T^k}|$,
\begin{align}\label{eq}
 \big\langle g(x_1, ..., x_n)^l, [M]\big\rangle=
 \sum_{[p]\in M^{T^k}/\sim}{{\varepsilon_{[p]}g(\lambda_{p,1}, ...,\lambda_{p,n})^l}\over {
 \sigma_n(\lambda_{p, 1}, ...,\lambda_{p,n}))}}=0.
\end{align}
Write $M^{T^k}/\sim=\{[p_1], ..., [p_m]\}$, let $l$ run over $0, 1, ..., m-1<  |M^{T^k}|$, we then obtain a system of equations from (\ref{eq}) that
\begin{equation*}
\begin{pmatrix}
1 &  \cdots & 1\\
g(\lambda_{p_1,1}, ...,\lambda_{p_1,n}) &\cdots& g(\lambda_{p_m,1}, ...,\lambda_{p_m,n})\\
\vdots &  \ddots & \vdots\\
g(\lambda_{p_1,1}, ...,\lambda_{p_1,n})^{m-1} & \cdots & g(\lambda_{p_m,1}, ...,\lambda_{p_m,n})^{m-1}\\
\end{pmatrix}
\begin{pmatrix}
{{\varepsilon_{[p_1]}}\over {\sigma_n(\lambda_{p_1,1}, ...,\lambda_{p_1,n})}} \\
{{\varepsilon_{[p_2]}}\over {\sigma_n(\lambda_{p_2,1}, ...,\lambda_{p_2,n})}} \\
\vdots \\
{{\varepsilon_{[p_m]}}\over {\sigma_n(\lambda_{p_m,1}, ...,\lambda_{p_m,n})}}
\end{pmatrix}
=
\begin{pmatrix}
0 \\
0 \\
\vdots \\
0
\end{pmatrix}
\end{equation*}
whose coefficient matrix is a Vandermonde matrix, so $\varepsilon_{[p_1]}=\cdots=\varepsilon_{[p_m]}=0$.  Furthermore, it follows from Lemma~\ref{zero} and Theorem~\ref{invariant1} that $M$ bounds equivariantly.
\end{proof}

The following example  illustrates that the top equivariant Chern class is not a valid choice for $g$ in Theorem~\ref{main thm1} when $n>1$.

\subsection{Counterexample: top equivariant Chern class is insufficient}
    \label{Example}
Given an integer $a$, let $\phi_a: S^1\times \mathbb{C}P^1\longrightarrow \mathbb{C}P^1$ be the $S^1$-action defined by $(s, [z_0: z_1])\longmapsto [z_0:s^az_1]$, which fixes two points $[1:0]$ and $[0:1]$ with weights $\{a\}$ and $\{-a\}$, respectively. Then,     the diagonal action of $\phi_a$ and $\phi_b$ gives a $S^1$-action $$\phi_{ab}: S^1\times \mathbb{C}P^1\times  \mathbb{C}P^1\longrightarrow \mathbb{C}P^1\times  \mathbb{C}P^1$$ by
$(s, [z_0:z_1], [z'_0, z'_1])\longmapsto ( [z_0:s^az_1], [z'_0, s^bz'_1]),$
which has four fixed points $$p_1=([1:0], [1:0]), p_2=([1:0], [0:1]), p_3=([0:1], [1:0]), p_4=([0:1], [0:1]),$$ and the corresponding weights at the fixed points are $\{a,b\}, \{a, -b\}, \{-a, b\}, \{-a,-b\}$, respectively. It is not difficult to see that the action $\phi_{ab}$ preserves the almost complex structure on $\mathbb{C}P^1\times  \mathbb{C}P^1$. By
$M(a,b)$ we denote the   $\mathbb{C}P^1\times  \mathbb{C}P^1$ with the $S^1$-action
$\phi_{ab}$.

Now we can easily read off the total equivariant Chern classes of the tangent representations at the four fixed points, as shown in the table below.
\[
\begin{array}{c|l}
\text{Fixed point}\ p & c^{S^1}(T_p M(a,b))\in H^*(BS^1)=\mathbb{Z}[t] \\
\hline
p_1 &\ \ \ \ \ \ \ (1+at)(1+bt) \\
p_2 &\ \ \ \ \ \ \ (1+at)(1-bt) \\
p_3 &\ \ \ \ \ \ \ (1-at)(1+bt) \\
p_4 &\ \ \ \ \ \ \ (1-at)(1-bt).
\end{array}
\]
Using ABBV localization formula, a direct calculation shows that for any integer $l\geq 0$
$$\langle (c_2^{S^1}(TM(a,b)))^l, [M(a,b)] \rangle =2(abt^2)^{l-1}+2(-abt^2)^{l-1}
$$
and
$$\langle (c_1^{S^1})^2c_2^{S^1}(TM(a,b)), [M(a,b)] \rangle =4(a^2+b^2)t^2.
$$
Take $(a, b)=(1, 6)$ and $(2,3)$, respectively, we have that
$$\langle (c_2^{S^1}(TM(1,6)))^l, [M(1,6)] \rangle=\langle (c_2^{S^1}(TM(2,3)))^l, [M(2,3)] \rangle=2(6t^2)^{l-1}+2(-6t^2)^{l-1}$$
always holds,
but
\begin{align*}
  \langle (c_1^{S^1})^2c_2^{S^1}(TM(1,6)), [M(1,6)] \rangle =148t^2
 \not=  26t^2=
\langle (c_1^{S^1})^2c_2^{S^1}(TM(2,3)), [M(2,3)] \rangle,\end{align*}
implying that $M(1,6)$ is not equivariantly bordant to $M(2,3)$. This reveals that generally,  the top equivariant Chern class
cannot be chosen as  a distinguishing polynomial  when $n>1$.

\vskip .2cm
We end this subsection by calculating \(\mathbf{m}(1,M(a,b))\), demonstrating its computability.
Recall that the minimal distinguishing degree $${\bf m}(k, M^{2n})=\min\{\deg g\big| g \text{ is a distinguishing  polynomial}\}. $$
From the above table, we  write out
 their first equivariant Chern classes of the tangent representations at the four fixed points, as listed in the table below.
\[
\begin{array}{c|l}
\text{Fixed point } p & c_1^{S^1}(T_p M(a,b)) \\
\hline
p_1 &\ \ \ (a+b)t \\
p_2 &\ \ \ (a-b)t \\
p_3 &\ \ \ (b-a)t \\
p_4 &\ -(a+b)t.
\end{array}
\]
When $a\not= b$, these first equivariant Chern classes are all distinct. By Definition~\ref{dis-poly-notion},  $g=c_1^{S^1}(TM(a,b))$ is a distinguishing polynomial. When $a=b$, the fixed point set $M(a,a)^{S^1}$ is divided into three equivalence classes $[p_1], [p_2]=[p_3], [p_4]$ since
$$c^{S^1}(T_{p_2} M(a,b))=c^{S^1}(T_{p_3} M(a,b))=1-a^2t^2.$$ In this case, obviously, $g=c_1^{S^1}(TM(a,b))$ is also a distinguishing polynomial. Thus, we conclude that \(\mathbf{m}(1,M(a,b))=2\) regardless of whether \(a\neq b\) holds or not.

\subsection{The case  $k=n$}\label{Case-k=n}
In what follows, we present explicit admissible forms of $g$ stated in Theorem~\ref{main thm1} for the case $k=n$, demonstrating that multiple valid choices of $g$ are available.

Let~$M^{2n}$ be a $2n$-dimensional unitary  $T^n$-manifold with a finite fixed point set $M^{T^n}$.
Take a fixed  point $p\in M^{T^n}$. Since
the action of $T^n$ on $M^{2n}$ is assumed to be effective,
the tangent representation of  $T_pM$ is faithful, so $n$ factors of degree 2 in $c_n^{T^n}(T_pM)=\prod_{i=1}^n\lambda_{p,i}\in H^\ast(BT^n)$ are linearly independent in $H^2(BT^n)$, and then $c_1^{T^n}(T_pM)=\sigma_1(\lambda_{p,1}, ..., \lambda_{p,n})\not=0$.

\begin{lem}\label{n=k}
 Let $p,q\in M^{T^n}$. Then  the following statements are equivalent.
 \begin{itemize}
     \item[(1)]
 $c^{T^n}(T_pM)=c^{T^n}(T_qM)$;
 \item[(2)]
$c_1^{T^n}(T_pM)=c_1^{T^n}(T_qM)$ and $c_2^{T^n}(T_pM)=c_2^{T^n}(T_qM)$;
\item [(3)]
$c_1^{T^n}(T_pM)=c_1^{T^n}(T_qM)$ and $c_n^{T^n}(T_pM)=c_n^{T^n}(T_qM)$.
  \end{itemize}
\end{lem}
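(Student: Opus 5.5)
The plan is to work with the weights at each fixed point directly. The implications (1)$\Rightarrow$(2) and (1)$\Rightarrow$(3) are trivial, since $c_1^{T^n}$, $c_2^{T^n}$ and $c_n^{T^n}$ are components of the total class. So the content is (2)$\Rightarrow$(1) and (3)$\Rightarrow$(1).

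The key input is a sharpening of the remark before the lemma. Faithfulness of $T_pM$ gives more than linear independence of $\lambda_{p,1},\dots,\lambda_{p,n}$. The representation $T_pM$ is a homomorphism $T^n\to T^n$ whose matrix has rows $(\lambda_{p,i,j})_j$. An injective homomorphism between tori of the same dimension is an isomorphism. Hence this matrix lies in $GL_n(\mathbb Z)$, and $\{\lambda_{p,i}\}$ is a $\mathbb Z$-basis of $H^2(BT^n)=\mathbb Z\langle t_1,\dots,t_n\rangle$. The same holds at $q$. Consequently there is a matrix $A=(a_{ji})\in GL_n(\mathbb Z)$ with
$$\lambda_{q,j}=\sum_i a_{ji}\lambda_{p,i}.$$
The target in both cases is to show that $A$ is a permutation matrix.

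For (2)$\Rightarrow$(1): by Newton's identity $\sum_i x_i^2=\sigma_1^2-2\sigma_2$, equality of $c_1$ and $c_2$ gives
$$\sum_j\lambda_{q,j}^2=\sum_i\lambda_{p,i}^2\in\mathbb Z[t_1,\dots,t_n].$$
Rewrite both sides as quadratic forms in the independent variables $\lambda_{p,1},\dots,\lambda_{p,n}$; this is a legitimate change of variables since they form a basis. Comparing coefficients then yields $A^{T}A=I$. An integer matrix with $A^TA=I$ has columns that are integer vectors of norm $1$, so it is a signed permutation matrix. Hence $\lambda_{q,j}=\pm\lambda_{p,\pi(j)}$ for some permutation $\pi$. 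Equality of $c_1$ now reads $\sum_i(\pm1)\lambda_{p,i}=\sum_i\lambda_{p,i}$. Linear independence forces every sign to be $+$. Therefore the multisets of weights agree, and so $c^{T^n}(T_pM)=c^{T^n}(T_qM)$.

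For (3)$\Rightarrow$(1): use unique factorization in the UFD $\mathbb Z[t_1,\dots,t_n]$. Each $\lambda_{p,i}$ is a degree-one polynomial that is primitive, because it is a member of a $\mathbb Z$-basis; hence it is irreducible, and the same holds for each $\lambda_{q,j}$. The equality $\prod_i\lambda_{p,i}=\prod_j\lambda_{q,j}$ therefore gives $\lambda_{q,j}=\pm\lambda_{p,\pi(j)}$, since the units are $\pm1$. The same $c_1$ argument as above removes the signs.

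The main obstacle is the integrality step: I need $A$ to have integer entries, not merely rational ones. A rational orthogonal matrix need not be a signed permutation, and a non-primitive linear form such as $2t_1$ is not irreducible in $\mathbb Z[t_1,\dots,t_n]$. I would therefore state carefully and justify the upgrade from ``linearly independent'' to ``$\mathbb Z$-basis'', using that effectiveness of the action makes $T_pM$ injective as a map $T^n\to T^n$.
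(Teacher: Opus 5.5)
Your proof is correct, and it is organized differently from the paper's. The paper does not prove (2)$\Rightarrow$(1) itself: it quotes Lemma 3.2 of L\"u--Tan [LT1] for (1)$\Leftrightarrow$(2). It then proves (3)$\Rightarrow$(2) rather than (3)$\Rightarrow$(1). It asserts ``without loss of generality'' that $\lambda_{q,i}=a_i\lambda_{p,i}$ with $a_i=\pm1$, deduces $\sum_i\lambda_{p,i}^2=\sum_i\lambda_{q,i}^2$, and converts this into $c_2^{T^n}(T_pM)=c_2^{T^n}(T_qM)$ via $s_2=\sigma_1^2-2\sigma_2$.

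You run Newton's identity in the opposite direction and add the integral-orthogonality step ($A\in GL_n(\mathbb Z)$ with $A^TA=I$ forces a signed permutation). This gives a self-contained proof of the quoted implication. For (3) you go directly to (1): once the weights agree up to sign and permutation, equality of $c_1$ plus linear independence removes the signs. So the paper's detour through $s_2$ and (2) is not needed.

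Your upgrade from ``linearly independent'' to ``$\mathbb Z$-basis'' is also exactly what justifies the paper's tacit ``$a_i=\pm1$''. Linear independence alone permits non-primitive weights such as $2t_1$. The paper's route is shorter because it outsources the hardest step; yours is self-contained and makes explicit where effectiveness of the action enters.

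A small refinement: integrality is genuinely essential only for (2)$\Rightarrow$(1). For example, the rational orthogonal matrix $\frac17\begin{pmatrix}3&-2&6\\6&3&-2\\-2&6&3\end{pmatrix}$ fixes $(1,1,1)$. Applied to the non-faithful weights $7t_1,7t_2,7t_3$, it produces integral weights with the same $c_1,c_2$ but a different total class. By contrast, for (3)$\Rightarrow$(1) you could factor in $\mathbb Q[t_1,\dots,t_n]$ and obtain $\lambda_{q,j}=a_j\lambda_{p,\pi(j)}$ with $a_j\in\mathbb Q^\times$. Equality of $c_1$ then forces every $a_j=1$, so mere linear independence suffices there.
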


\begin{proof}
The equivalence  $(1) \Longleftrightarrow (2)$  is precisely established by
\cite[Lemma 3.2]{LT1}.
The proof of  $(1) \Longrightarrow (3)$ is obvious. Thus, it suffices to show that
$(3) \Longrightarrow (2)$. This is equivalent to showing $c_2^{T^n}(T_pM)=c_2^{T^n}(T_qM)$ if (3) holds. For $n=1$, $c_2^{T^n}(T_pM)$ and $c_2^{T^n}(T_qM)$ naturally degenerates to zero, so we may assume $n>1$.
From
 $$c_n^{T^n}(T_pM)=\prod_{i=1}^n\lambda_{p,i}=\prod_{i=1}^n\lambda_{q,i} =c_n^{T^n}(T_qM),$$ without loss of generality,   assume that  $\lambda_{q, i}=a_i\lambda_{p,i}, i=1, ..., n$, where $a_i=\pm 1$.  Then it follows
 $$s_2(\lambda_{p,1}, ..., \lambda_{p,n})=\sum_{i=1}^n\lambda_{p,i}^2=
\sum_{i=1}^n\lambda_{q,i}^2= s_2(\lambda_{q,1}, ..., \lambda_{q,n}).$$
By Newton formula (cf~\cite{MS1}), we know that $s_2=\sigma_1^2-2\sigma_2$, so
$$(c_1^{T^n}(T_pM))^2-2c_2^{T^n}(T_pM)=(c_1^{T^n}(T_qM))^2-2c_2^{T^n}(T_qM).$$
 This gives $c_2^{T^n}(T_pM)=c_2^{T^n}(T_qM)$ since $c_1^{T^n}(T_pM)=c_1^{T^n}(T_qM)$.
\end{proof}

By the equivalent conditions in Lemma~\ref{n=k}, we obtain two  explicit  admissible expressions of $g$ when $k=n$, which  yields the following equivariant bordism criterion.

\begin{prop}\label{gn=k}
 Let $M^{2n}$ be a $2n$-dimensional unitary $T^n$-manifold with a finite fixed point set $M^{T^k}$.
Then $M^{2n}$ is an equivariant boundary precisely when
$$\langle g^l, [M]\rangle = 0$$
for all integers $l < |M^{T^n}|$, with two admissible choices
$g = c_1^{T^n}(TM)+c_2^{T^n}(TM) $ or $ g = c_1^{T^n}(TM)+c_n^{T^n}(TM)$.
\end{prop}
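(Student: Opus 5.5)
The plan is to reduce Proposition~\ref{gn=k} to the argument already given for Theorem~\ref{main thm1}. That proof never used the particular form \eqref{distin-poly} of $g$. The ``only if'' direction follows from Theorem~\ref{invariant1} for any class $g$ built from equivariant Chern classes. The ``if'' direction used only that $g$ is a distinguishing polynomial in the sense of Definition~\ref{dis-poly-notion}: the values $g(\lambda_{p_i,1},\dots,\lambda_{p_i,n})$ are then pairwise distinct over $M^{T^n}/\sim=\{[p_1],\dots,[p_m]\}$. The Vandermonde system built from the vanishing of $\langle g^l,[M]\rangle$ for $0\le l\le m-1<|M^{T^n}|$ then forces every $\varepsilon_{[p_i]}$ to vanish, and Lemma~\ref{zero} together with Theorem~\ref{invariant1} finishes. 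So it suffices to show that both $g=c_1^{T^n}+c_2^{T^n}$ and $g=c_1^{T^n}+c_n^{T^n}$ are distinguishing polynomials for $M^{T^n}$.

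For this I use the grading of $H^*(BT^n)=\mathbb{Z}[t_1,\dots,t_n]$. Suppose $[p]\neq[q]$ but $g$ takes the same value at $p$ and at $q$.

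\emph{First choice, $n\ge 2$.} Here $c_1^{T^n}(T_pM)$ is homogeneous of degree $2$ and $c_2^{T^n}(T_pM)$ is homogeneous of degree $4$. Comparing homogeneous components of
\[
c_1^{T^n}(T_pM)+c_2^{T^n}(T_pM)=c_1^{T^n}(T_qM)+c_2^{T^n}(T_qM)
\]
gives $c_1^{T^n}(T_pM)=c_1^{T^n}(T_qM)$ and $c_2^{T^n}(T_pM)=c_2^{T^n}(T_qM)$. By $(2)\Rightarrow(1)$ of Lemma~\ref{n=k}, $p\sim q$, a contradiction.

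\emph{Second choice, $n\ge 2$.} The same comparison works, since $c_n^{T^n}$ has degree $2n>2$. It yields equality of $c_1^{T^n}$ and of $c_n^{T^n}$ at $p$ and $q$, and $(3)\Rightarrow(1)$ of Lemma~\ref{n=k} gives the contradiction.

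The only place needing care is the degenerate case $n=1$, where the degree-separation argument collapses. There $c_2^{T^1}=0$ and $c_n^{T^1}=c_1^{T^1}$, so the two choices become $g=c_1^{T^1}$ and $g=2c_1^{T^1}$. Since $c^{T^1}(T_pM)=1+c_1^{T^1}(T_pM)$, two fixed points are equivalent exactly when their first Chern classes agree. Because $\mathbb{Z}[t_1]$ is torsion-free, $2c_1^{T^1}(T_pM)=2c_1^{T^1}(T_qM)$ still forces $c_1^{T^1}(T_pM)=c_1^{T^1}(T_qM)$, so both are distinguishing. I expect no genuine obstacle: the substantive input is Lemma~\ref{n=k}, which is already established. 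The main point to state carefully is that the proof of Theorem~\ref{main thm1} applies verbatim to any distinguishing polynomial, not just the one produced in Lemma~\ref{polynomial}.
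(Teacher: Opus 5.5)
Your proposal is correct and follows the paper's proof essentially verbatim: you reduce to the Vandermonde argument of Theorem~\ref{main thm1}, then show that $c_1^{T^n}+c_2^{T^n}$ and $c_1^{T^n}+c_n^{T^n}$ are distinguishing polynomials by separating homogeneous components in the graded ring $H^*(BT^n)$ and applying Lemma~\ref{n=k}. Your separate treatment of $n=1$ is a small point of care that the paper's proof passes over: there $c_n^{T^1}=c_1^{T^1}$, so $g=2c_1^{T^1}$ and degree separation is unavailable, but torsion-freeness of $\mathbb{Z}[t_1]$ still suffices.
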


\begin{proof}
 By the proof of Theorem~\ref{main thm1}, it suffices to show that $c_1^{T^n}(TM)+c_2^{T^n}(TM) $ and $  c_1^{T^n}(TM)+c_n^{T^n}(TM)$ are two distinguishing polynomials. This is equivalent to proving that
 for two different  classes $[p], [q]\in M^{T^n}/\sim$,
  $$c_1^{T^n}(T_pM)+c_2^{T^n}(T_pM)\not=c_1^{T^n}(T_qM)+c_2^{T^n}(T_qM)$$
  and
 $$c_1^{T^n}(T_pM)+c_n^{T^n}(T_pM)\not=c_1^{T^n}(T_qM)+c_n^{T^n}(T_qM).$$
 If not, assume that  $c_1^{T^n}(T_pM)+c_2^{T^n}(T_pM)=c_1^{T^n}(T_qM)+c_2^{T^n}(T_qM)$. Then it  immediately follows that   $c_1^{T^n}(T_pM)$ $ = c_1^{T^n}(T_qM)$ and $c_2^{T^n}(T_pM)=c_2^{T^n}(T_qM)$ from the degree properties of the graded ring $H^*(BT^n)$. Similarly,
 if $$c_1^{T^n}(T_pM)+c_n^{T^n}(T_pM)=c_1^{T^n}(T_qM)+c_n^{T^n}(T_qM),$$
 then we conclude that $c_1^{T^n}(T_pM)=c_1^{T^n}(T_qM)$ and $c_n^{T^n}(T_pM)=c_n^{T^n}(T_qM)$.
 Furthermore, in either case, it follows from Lemma~\ref{n=k} that
 $c^{T^n}(T_pM)=c^{T^n}(T_qM)$, implying that $[p]=[q]$, a contradiction.
\end{proof}

Using  $g = c_1^{T^n}(TM)+c_2^{T^n}(TM)$ of degree 4 in Proposition~\ref{gn=k}, one derives that $\boldsymbol{m}(n, M^{2n})\leq 4$. With Corollary~\ref{md-deg inequ} together, we conclude that
\begin{cor} \label{conj-solution}  Suppose that~$M^{2n}$ is  a  $2n$-dimensional  unitary  $T^n$-manifold with a finite fixed point set $M^{T^n}$.  Then $\boldsymbol{m}(n, M^{2n})\leq 4$. Moreover, $2\chi(M)>n$ if $M^{2n}$ does not bound equivariantly.
  \end{cor}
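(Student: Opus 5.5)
The bound $\boldsymbol{m}(n,M^{2n})\leq 4$ comes straight from Proposition~\ref{gn=k}; the inequality $2\chi(M)>n$ then follows by combining it with the contrapositive of Corollary~\ref{md-deg inequ}, i.e.\ with inequality~\eqref{important-inequ}.

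\emph{Step 1: the bound on $\boldsymbol{m}$.} The proof of Proposition~\ref{gn=k} shows that $g=c_1^{T^n}(TM)+c_2^{T^n}(TM)$ is a distinguishing polynomial in the sense of Definition~\ref{dis-poly-notion}. The key input is Lemma~\ref{n=k}: agreement of $c_1$ and $c_2$ at two fixed points forces the whole total Chern classes to agree there. Since $H^*(BT^n)$ is graded, equality of the inhomogeneous sums $c_1+c_2$ at $p$ and $q$ splits into equality of the degree-$2$ parts and of the degree-$4$ parts. So two distinct classes $[p]\neq[q]$ cannot share a value of $g$.

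\emph{Degree convention.} Here $\deg g$ means the top cohomological degree of $g$, with $\deg c_i=2i$, so $\deg g=4$. This convention matches the paper's statement that $\boldsymbol{m}$ is an even positive integer. Since $\boldsymbol{m}$ is a minimum over all distinguishing polynomials, we get $\boldsymbol{m}(n,M^{2n})\leq 4$.

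\emph{Step 2: the Euler characteristic bound.} Suppose $M^{2n}$ does not bound equivariantly. By Corollary~\ref{md-deg inequ} (equivalently, inequality~\eqref{important-inequ}), $\boldsymbol{m}(n,M^{2n})\,\chi(M^{2n})>2n$. Using $\chi(M^{2n})=|M^{T^n}|>0$ and Step 1,
\[
4\chi(M^{2n})\;\geq\;\boldsymbol{m}(n,M^{2n})\,\chi(M^{2n})\;>\;2n,
\]
which gives $2\chi(M)>n$.

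\emph{Main obstacle.} There is no real obstacle. The only point needing care is the degree convention in Step 1: the minimal distinguishing degree must be measured in cohomological degree, so that $\deg(c_1+c_2)=4$ and not $2$ as a polynomial in the $\sigma_i$. This convention must match the one used in Corollary~\ref{md-deg inequ}. Underlying that corollary, $\langle g^l,[M]\rangle$ vanishes for degree reasons when $l\deg g<2n$ (Remark~\ref{deg<2n}), and Theorem~\ref{main thm1} then forces bounding when $\boldsymbol{m}\chi\leq 2n$. With the convention fixed consistently, the argument is immediate.
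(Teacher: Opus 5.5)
Your proposal is correct and is essentially the paper's own argument: the paper also reads off $\boldsymbol{m}(n,M^{2n})\leq 4$ from the degree-$4$ distinguishing polynomial $g=c_1^{T^n}(TM)+c_2^{T^n}(TM)$ of Proposition~\ref{gn=k}, and then combines this with Corollary~\ref{md-deg inequ} (inequality~\eqref{important-inequ}) to get $4\chi(M^{2n})\geq \boldsymbol{m}(n,M^{2n})\chi(M^{2n})>2n$. The only cosmetic point is $n=1$, where $c_2^{T^n}$ vanishes identically and so $\deg g=2$; this still gives $\boldsymbol{m}\leq 4$.
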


  The following example shows that $\boldsymbol{m}(n, M^{2n})=2$ is attainable.

  \begin{example}
   Consider the standard $T^n$-action on the complex projective space  $\mathbb{C}P^n$  defined by
   $$((g_1, ..., g_n), [z_0:z_1: \cdots : z_n])\longmapsto [z_0:g_1z_1: \cdots : g_nz_n].$$
   This action has \(n+1\) fixed points \(p_0,p_1,\dots,p_n\),
   where
$$p_i = [\underbrace{0:\dots:0}_{i}:1:0:\dots:0],\quad i=0,1,\dots,n. $$
By a direct calculation, we obtain that the total equivariant Chern class of the tangent representation at $p_i$ is
\[
c^{T^n}(T_{p_i}\mathbb{C}P^n) =
\begin{cases}
\prod_{j=1}^n (1+t_j)
& \text{ if } i=0\\
(1-t_i)\prod_{\substack{1\le j\le n\\j\ne i}}\big(1+t_j-t_i\big) & \text{ if } i\not=0
\end{cases}
\]
from which we have
\[
c_1^{T^n}(T_{p_i}\mathbb{C}P^n) =
\begin{cases}
\sum_{j=1}^n (1+t_j)
& \text{ if } i=0\\
 (\sum_{j=1}^n t_j)-(n+1)t_i
 & \text{ if } i\not=0.
\end{cases}
\]
Clearly, all $n+1$  first equivariant Chern classes are distinct. In fact, linear independence of  $t_1,\dots,t_n$ as generators in $H^*(BT^n)=\mathbb{Z}[t_1, ..., t_n]$ implies that \(c_1^{T^n}(T_{p_i}\mathbb{C}P^n)\neq c_1^{T^n}(T_{p_j}\mathbb{C}P^n)\) whenever \(i\neq j\).
Thus, $g=c_1^{T^n}(T\mathbb{C}P^n)$ is a distinguishing polynomial and is of degree 2. Thus, we conclude that ${\bf m}(n, \mathbb{C}P^n)=2$.
\end{example}

\subsection{The case $k=n-1$}\label{Case-k=n-1}
Let~$M^{2n}$ be a $2n$-dimensional unitary  $T^{n-1}$-manifold with a finite fixed point set $M^{T^{n-1}}$. Wen and Ma proved the following key lemma in \cite{WM}.

\begin{lem}\label{k=n-1}
 Let $p,q\in M^{T^{n-1}}$. Then
 $c^{T^{n-1}}(T_pM)=c^{T^{n-1}}(T_qM)$
 if and only if
$c_i^{T^{n-1}}(T_pM)=c_i^{T^{n-1}}(T_qM)$ for all $i\leq 6$.
 \end{lem}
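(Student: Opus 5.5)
The implication from $c^{T^{n-1}}(T_pM)=c^{T^{n-1}}(T_qM)$ to the equality of $c_i$ for $i\le 6$ is trivial, so the plan concerns only the converse. Since $H^*(BT^{n-1})=\mathbb{Z}[t_1,\dots,t_{n-1}]$ is a UFD and $c^{T^{n-1}}(T_pM)=\prod_i(1+\lambda_{p,i})$, it suffices to show that the multisets $\{\lambda_{p,i}\}$ and $\{\lambda_{q,i}\}$ of vectors in $V=H^2(BT^{n-1};\mathbb{Q})\cong\mathbb{Q}^{n-1}$ coincide. By the Newton formulas over $\mathbb{Q}$, the hypothesis is equivalent to the equality of power sums
\[
\sum_{i=1}^n\lambda_{p,i}^m=\sum_{i=1}^n\lambda_{q,i}^m\in \mathrm{Sym}^m V,\qquad 1\le m\le 6 .
\]
After cancelling common vectors (with multiplicity), group the remaining ones by the lines they span. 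This yields a relation $\sum_j c_j v_j^m=0$ for $m\le 6$, where the $v_j$ are pairwise non-proportional, the coefficients $c_j$ are nonzero integers, and there are at most $2n$ terms. The goal is to show that this relation is empty.

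\emph{Structural input from faithfulness.} The action is effective, so the $n$ weights at each fixed point span $V$, which has dimension $n-1$. Hence at $p$ there are $n-1$ weights forming a basis, and the remaining weight is a linear combination $\lambda_{p,n}=\sum a_i\lambda_{p,i}$. The same holds at $q$. Thus each side is ``a basis plus one extra vector'', and this is what must replace the degree-$n$ information normally needed to recover a multiset of $n$ vectors.

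\emph{Key tool: independence of small configurations of powers.} If $w_1,\dots,w_s$ are pairwise non-proportional vectors spanning a subspace of dimension $d$, then $w_1^m,\dots,w_s^m$ are linearly independent in $\mathrm{Sym}^m$ whenever $s$ is small compared with $m$ and $d$. For example, in a plane any $m+1$ non-proportional vectors have independent $m$-th powers. Also, a vector that does not lie in the span of the others cannot occur in such a relation: applying a linear functional that kills the others and is nonzero on it makes its term the only survivor. The plan is therefore as follows.
\begin{enumerate}
\item Use the degree-$2$ relation, compared as quadratic forms, together with the basis structure to show that all but a bounded number of the basis weights of $p$ reappear, up to sign, among the weights of $q$.
\item Pass to a quotient of $V$, or restrict to the subspace spanned by the few remaining non-matching vectors. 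This subspace has dimension at most $2$ or $3$ and contains at most about six non-proportional lines.
\item In this low-dimensional situation, apply the independence of $m$-th powers for $m\le 6$ to force all coefficients $c_j$ to vanish. This also settles the sign ambiguities, because odd powers distinguish $v$ from $-v$.
\end{enumerate}

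\emph{Main obstacle.} I expect the hardest step to be the bookkeeping in step 1. One must bound how many weights can fail to match, using only that each multiset is a basis plus one dependent vector, so that the leftover configuration really is small enough for degree $6$ to separate it. The sharp number $6$ should come from a worst case in which the two extra vectors and a few basis vectors span a $2$- or $3$-dimensional space. I would first work out this extremal case by hand, using a functional that kills all but the relevant vectors, and then check that every other case reduces to it.
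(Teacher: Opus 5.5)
The paper gives no proof of this lemma; it is quoted from Wen and Ma, so your outline has to stand on its own. Your reduction to equality of the power sums $s_m(P)=\sum_{\lambda\in P}\lambda^m$, $m\le 6$, of the weight multisets $P,Q$ at $p,q$ is fine. The gap is Step~1, which is the central step.

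\textbf{Step~1 is false as formulated.} You want $s_1,s_2$, together with the ``basis plus one extra vector'' structure, to force all but boundedly many basis weights of $p$ to reappear up to sign at $q$. Here is a counterexample. In $V=\mathbb{Q}^4$ ($n=5$) take $P=\{e_1\pm e_2,\ e_3\pm e_4,\ 2e_1+2e_3\}$ and $Q=\{e_3\pm e_2,\ e_1\pm e_4,\ 2e_1+2e_3\}$.
\begin{itemize}
\item Each multiset spans $V$; each is in fact a single circuit.
\item $s_1(P)=s_1(Q)=4e_1+4e_3$ and $s_2(P)=s_2(Q)=2\sum_i e_i^2+(2e_1+2e_3)^2$.
\item Yet no basis weight of $P$ is $\pm$ a weight of $Q$.
\end{itemize}
Taking orthogonal copies of the four-element blocks, with one common extra vector, makes the number of mismatches unbounded. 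These weights span only over $\mathbb{Q}$, but spanning is all the structure your Step~1 uses.

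\textbf{Step~2 inherits the problem.} Suppose the extra weight $u=\sum a_i\lambda_{p,i}$ has all $a_i\neq 0$. Then the weights at $p$ form one circuit spanning all of $V$. Nothing in your outline shrinks the $2n$ vectors in dimension $n-1$ to a configuration where independence of sixth powers applies (say, at most seven lines). A smaller point: grouping by lines gives coefficients $c_j(m)=\sum_a n_{j,a}a^m$ that depend on $m$, since a line can carry $v,-v,2v,\dots$.

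\textbf{A route that works.} Evaluate $s_m(P)=s_m(Q)$ at well-chosen real functionals and use positivity of even powers.

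(a) Coloops. Let $v\in P$ not lie in the span $H_v$ of the other weights of $p$. Take $x$ with $x|_{H_v}=0$ and $x(v)=1$. Then $\sum_{\mu\in Q}\mu(x)^m=1$ for $m=1,2,4$. Setting $y_\mu=\mu(x)^2\ge 0$, the equalities $\sum y_\mu=\sum y_\mu^2=1$ force exactly one $\mu$ with $\mu(x)=1$; all other weights of $Q$ lie in $H_v$. Split $V=E\oplus W$, with $E$ spanned by these coloops and $W$ by the circuit of $p$. This gives $Q=\{v+w_v\}\sqcup Q_0$ with $w_v\in W$ and $Q_0\subset W$; the assignment is injective because $Q$ spans $V$. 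The $E\otimes W$ component of $s_2$ gives $\sum_v v\otimes w_v=0$, so every $w_v=0$.

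(b) Circuits. After cancelling, the circuit $\{\lambda_1,\dots,\lambda_r\}$ of $p$ and $Q_0$ lie in $W$, which has dimension $r-1$. Step (a) with roles exchanged shows $Q_0$ is also a circuit. Take $x$ vanishing on $\lambda_1,\dots,\lambda_{r-2}$ with $x(\lambda_{r-1})=1$ and $x(\lambda_r)=\alpha$. With $\beta=\alpha^2$, the sums $s_2,s_4,s_6$ give $\sum y=1+\beta$, $\sum y^2=1+\beta^2$ and $\sum y^3=1+\beta^3$. Hence $\sigma_3(y)=0$, so exactly two $\mu$'s lie off $H=\ker x$, with $x$-values $1$ and $\alpha$. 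This is where degree $6$ genuinely enters.

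Write those two weights as $\lambda_{r-1}+h'$ and $\alpha\lambda_{r-1}+h''$, and write $\lambda_r=\alpha\lambda_{r-1}+h_r$, with $h',h'',h_r\in H$. The $\lambda_{r-1}^{m-1}\cdot H$ components of $s_2$ and $s_3$ give $h'+\alpha^{m-1}h''=\alpha^{m-1}h_r$ for $m=2,3$. If $\alpha\neq 1$, this yields $h'=0$ and $h''=h_r$. If $\alpha=1$, the $\lambda_{r-1}\cdot\mathrm{Sym}^2H$ component of $s_3$ adds $h'h''=0$. In both cases the two weights are $\lambda_{r-1}$ and $\lambda_r$. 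The remaining $r-2$ weights on each side are bases of $H$, and (a) finishes the proof.
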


For any fixed point $p\in M^{T^{n-1}}$,  each equivariant Chern class $ c_i^{T^{n-1}}(T_pM)$ is of degree $2i$.  This degree constraint forces Lemma \ref{k=n-1} to hold only for \(n\ge 6\). Consequently, the polynomial
\(g=\sum_{i=1}^6 c_i^{T^{n-1}}(TM)\)
is an explicit distinguishing polynomial of  degree 12. Therefore,  we obtain by Corollary~\ref{md-deg inequ} that

\begin{cor}\label{weak-ineq}
Let~$M^{2n}$ be a $2n$-dimensional unitary  $T^{n-1}$-manifold with a finite fixed point set. Then
  ${\bf m}(n-1, M^{2n})\leq 12$.  Moreover, $6\chi(M^{2n})>n$ provided that $M^{2n}$ does not bound equivariantly.
\end{cor}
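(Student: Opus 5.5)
The statement is Corollary~\ref{weak-ineq}, which has two parts: the bound ${\bf m}(n-1,M^{2n})\le 12$, and the inequality $6\chi(M^{2n})>n$ when $M^{2n}$ does not bound equivariantly. I would prove them in that order: first exhibit an explicit distinguishing polynomial of degree $12$, then feed its degree into the contrapositive of Corollary~\ref{md-deg inequ}.

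\emph{Step 1: a distinguishing polynomial of degree 12.} Set $g=\sum_{i=1}^6 c_i^{T^{n-1}}(TM)$, that is, $g(x_1,\dots,x_n)=\sum_{i=1}^6\sigma_i(x_1,\dots,x_n)$. Let $[p]\neq[q]$ be two distinct classes in $M^{T^{n-1}}/\sim$, and suppose for contradiction that
\[
g(\lambda_{p,1},\dots,\lambda_{p,n})=g(\lambda_{q,1},\dots,\lambda_{q,n}).
\]
For each $i$, the term $\sigma_i(\lambda_{p,1},\dots,\lambda_{p,n})$ lies in the homogeneous piece $H^{2i}(BT^{n-1})$, and the same holds at $q$. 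The ring $H^*(BT^{n-1})=\mathbb Z[t_1,\dots,t_{n-1}]$ is graded, so we may compare homogeneous components degree by degree, exactly as in the proof of Proposition~\ref{gn=k}. This gives $c_i^{T^{n-1}}(T_pM)=c_i^{T^{n-1}}(T_qM)$ for every $i\le 6$.

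If $n<6$, the classes $c_i^{T^{n-1}}$ with $i>n$ vanish. The condition ``$c_i$ agree for all $i\le 6$'' then already contains all of $c_1,\dots,c_n$, so it forces $c^{T^{n-1}}(T_pM)=c^{T^{n-1}}(T_qM)$ directly. If $n\ge 6$, Lemma~\ref{k=n-1} gives the same conclusion. In either case $[p]=[q]$, a contradiction. Hence $g$ is a distinguishing polynomial.

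\emph{Step 2: the degree bound.} The degree of $g$ is the degree of its top nonzero summand. For $n\ge 6$ this is $c_6$, so $\deg g=12$. For small $n$ the top summand is $c_n$ and $\deg g=2n<12$. In all cases $\deg g\le 12$, and by definition of the minimal distinguishing degree, ${\bf m}(n-1,M^{2n})\le 12$.

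\emph{Step 3: the Euler characteristic bound.} Suppose $M^{2n}$ does not bound equivariantly. The contrapositive of Corollary~\ref{md-deg inequ} (inequality~\eqref{important-inequ}) gives ${\bf m}(n-1,M^{2n})\,\chi(M^{2n})>2n$. Combining this with Step 2,
\[
12\,\chi(M^{2n})\ \ge\ {\bf m}(n-1,M^{2n})\,\chi(M^{2n})\ >\ 2n,
\]
and therefore $6\chi(M^{2n})>n$.

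The only delicate point is the graded-component separation in Step 1, together with making sure the $n<6$ case is covered independently of Lemma~\ref{k=n-1}. Everything else follows directly from the earlier results cited.
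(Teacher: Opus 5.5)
Your proposal is correct and follows the paper's argument. You take $g=\sum_{i=1}^6 c_i^{T^{n-1}}(TM)$, separate homogeneous components to reduce to Lemma~\ref{k=n-1}, conclude ${\bf m}(n-1,M^{2n})\le 12$, and then combine this with inequality~\eqref{important-inequ}. Your explicit treatment of the case $n<6$ (where $\deg g=2n<12$ and agreement of $c_1,\dots,c_n$ is immediate) is a cleaner version of the paper's brief remark about the degree constraint.
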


On the other hand, when $n=3$,  as noted by Wen and Ma in~\cite[Remark 6]{WM},  the 6-sphere \(S^6 \cong G_2/SU(3)\) admits a \(G_2\)-invariant almost complex structure. The canonical action of a common maximal torus \(T^2\)  on \(S^6\), which preserves this almost complex structure,   has exactly two isolated fixed points.
This explicit homogeneous $T^2$-manifold of dimension 6 serves as a concrete example supporting the inequality $2\chi(M)>n$.

Now let us look at  ${\bf m}(2, S^6)$. Since $\chi(S^6)=2$, we then see from the inequality (\ref{important-inequ})
that ${\bf m}(2, S^6)\chi(S^6)=2{\bf m}(2, S^6)>6$ so ${\bf m}(2, S^6)>3$. By direct calculation, we can  show that
${\bf m}(2, S^6)=6$. In fact, it is well-known that the weight sets of the tangent representations at two fixed points $p_+$ and $p_-$ are
$$W_+=\{(1,0),\ (-1,1),\ (0,-1)\}, \ \
W_-=\{(-1,0),\ (1,-1),\ (0,1)\}$$
respectively. Then the total equivariant Chern classes of the tangent representations $T_{p_+}S^6$ and $T_{p_-}S^6$ are
$$c^{T^2}(T_{p_+}S^6)
=(1+t_1)(1-t_1+t_2)(1-t_2)$$
and
$$c^{T^2}(T_{p_-}S^6)
=(1-t_1)(1+t_1-t_2)(1+t_2)$$
respectively, in $H^*(BT^2)=\mathbb{Z}[t_1,t_2]$. Furthermore, it follows that
$$c_1^{T^2}(T_{p_+}S^6)=
c_1^{T^2}(T_{p_-}S^6)=0, \ \ \
c_2^{T^2}(T_{p_+}S^6)=
c_2^{T^2}(T_{p_-}S^6)=t_1t_2-t_1^2-t_2^2,
$$
and
$c_3^{T^2}(T_{p_+}S^6)=
t_1t_2(t_1-t_2)\not=-t_1t_2(t_1-t_2)=
c_3^{T^2}(T_{p_-}S^6)$, thus concluding that ${\bf m}(2, S^6)=6$. Again using the inequality
(\ref{important-inequ}), we derive that ${\bf m}(2, S^6)\chi(S^6)=6\chi(S^6)>6$, so $\chi(S^6)>1$, giving a sharp inequality  and supporting $2\chi(S^6)>3$ since $\chi(S^6)=2$.

This example illustrates that the inequality
${\bf m}(k, M^{2n})\chi(M)>2n$
in
(\ref{important-inequ})
may lead to a sharp inequality even for ${\bf m}(k, M^{2n})>4$.

\section{Equivariant bordism for unoriented  $(\mathbb{Z}_2)^k$-manifolds}\label{proof2}

In this section, we give the proof of Theorem~\ref{main thm2}.

\begin{lem}\label{equ}
Let $M^n$ be an $n$-dimensional smooth closed $(\mathbb{Z}_2)^k$-manifold with  a finite fixed point set $M^{(\mathbb{Z}_2)^k}$. Then $e^{(\mathbb{Z}_2)^k}(T_pM), p\in M^{(\mathbb{Z}_2)^k}$ appear in pairs if and only if
$$\sum_{p\in M^{(\mathbb{Z}_2)^k}}(e^{(\mathbb{Z}_2)^k}(T_pM))^h=0$$
for all $0\leq h<|M^{(\mathbb{Z}_2)^k}|$.
\end{lem}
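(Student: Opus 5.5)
The plan is to group fixed points by their Euler class and then use a Vandermonde argument over $\mathbb{Z}_2$, in the same way as in the proof of Theorem~\ref{main thm1}. Let $N=|M^{(\mathbb{Z}_2)^k}|$, and group the fixed points according to the value of $e_p:=e^{(\mathbb{Z}_2)^k}(T_pM)\in H^n(B(\mathbb{Z}_2)^k;\mathbb{Z}_2)$. Let $e_1,\dots,e_m$ be the distinct values that occur, with multiplicities $n_1,\dots,n_m$, so that $m\le N$ and $\sum_j n_j=N$. Each $e_j$ is a product of nonzero linear forms in the integral domain $\mathbb{Z}_2[a_1,\dots,a_k]$, so each $e_j\neq 0$. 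The phrase ``appear in pairs'' we read as: every $n_j$ is even, i.e.\ the multiset $\{e_p\}$ splits into pairs of equal classes.

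For the forward direction, assume every $n_j$ is even. Then for every $h\ge 0$,
$$\sum_{p}e_p^h=\sum_{j=1}^m n_j e_j^h=0,$$
since we are working in characteristic $2$. This gives the required vanishing, including for all $h<N$.

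For the converse, assume $\sum_j n_j e_j^h=0$ for $0\le h<N$, and use only $h=0,1,\dots,m-1$. This is allowed because $m\le N$. Writing $\bar n_j\in\mathbb{Z}_2$ for $n_j \bmod 2$, these equations form a linear system $V\bar{\mathbf n}=0$ over the field of fractions $\mathbb{Z}_2(a_1,\dots,a_k)$. Here $V=(e_j^h)_{0\le h\le m-1,\,1\le j\le m}$ is a Vandermonde matrix. Its determinant is $\prod_{i<j}(e_j-e_i)$, which is nonzero because the $e_j$ are pairwise distinct. Hence $\bar{\mathbf n}=0$, so every $n_j$ is even, which is exactly the pairing.

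I expect no serious obstacle in this argument. The only point needing care is that the Vandermonde argument must be run over a field: we pass from the polynomial ring $\mathbb{Z}_2[a_1,\dots,a_k]$ to its fraction field, which is legitimate because the ring is an integral domain. We must also note that the unknowns are the parities $n_j \bmod 2$ rather than the integers $n_j$ themselves. Once the system is set up with these parities, the nonvanishing of the Vandermonde determinant forces them all to be zero, and the rest is formal.
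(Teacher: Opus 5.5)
Your proposal is correct and takes essentially the same approach as the paper. The forward direction is immediate in characteristic $2$. For the converse, the paper keeps only the distinct Euler classes of odd multiplicity and applies the Vandermonde matrix to the all-ones vector, whereas you keep all distinct classes with their parities as unknowns; both rest on the nonsingularity of the Vandermonde matrix for $h=0,\dots,m-1$ with $m\le |M^{(\mathbb{Z}_2)^k}|$, and your passage to the fraction field $\mathbb{Z}_2(a_1,\dots,a_k)$ simply makes explicit a step the paper leaves implicit.
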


\begin{proof}
If all $e^{(\mathbb{Z}_2)^k}(T_pM), p\in M^{(\mathbb{Z}_2)^k}$ appear in pairs, then it is obvious that $$\sum_{p\in M^{(\mathbb{Z}_2)^k}}(e^{(\mathbb{Z}_2)^k}(T_pM))^h=0$$
for all $h\geq 0$.

Now suppose that $\sum_{p\in M^{(\mathbb{Z}_2)^k}}(e^{(\mathbb{Z}_2)^k}(T_pM))^h=0$
for all $0\leq h<|M^{(\mathbb{Z}_2)^k}|$. If all $e^{(\mathbb{Z}_2)^k}(T_pM)$, $p\in M^{(\mathbb{Z}_2)^k}$ do not appear in pairs, then there must exist a subset $\mathcal{S}=\{p_{i_1}, ..., p_{i_m}\}$ of $ M^{(\mathbb{Z}_2)^k}$ with
$m\leq |M^{(\mathbb{Z}_2)^k}|$
such that all $e^{(\mathbb{Z}_2)^k}(T_pM), p\in \mathcal{S}$ are distinct, and $\sum_{p\in M^{(\mathbb{Z}_2)^k}}(e^{(\mathbb{Z}_2)^k}(T_pM))^h=\sum_{p\in \mathcal{S}}(e^{(\mathbb{Z}_2)^k}(T_pM))^h=0$
for all $0\leq h<|M^{(\mathbb{Z}_2)^k}|$.

Let $h$ run over $0, 1, ..., m-1$,
  we then obtain
\begin{equation*}
\begin{pmatrix}
1&  \cdots & 1\\
e^{(\mathbb{Z}_2)^k}(T_{p_{i_1}}M) &\cdots& e^{(\mathbb{Z}_2)^k}(T_{p_{i_m}}M)\\
\vdots &  \ddots & \vdots\\
(e^{(\mathbb{Z}_2)^k}(T_{p_{i_1}}M))^{m-1} & \cdots & (e^{(\mathbb{Z}_2)^k}(T_{p_{i_m}}M))^{m-1}\\
\end{pmatrix}
\begin{pmatrix}
1 \\
1 \\
\vdots \\
1
\end{pmatrix}
=
\begin{pmatrix}
0 \\
0 \\
\vdots \\
0
\end{pmatrix}
\end{equation*}
whose coefficient matrix is a Vandermonde matrix, giving a contradiction. Thus, $e^{(\mathbb{Z}_2)^k}(T_pM), p\in M^{(\mathbb{Z}_2)^k}$ appear in pairs.
\end{proof}

Next let us prove  Theorem~\ref{main thm2}.

\begin{proof}[Proof of Theorem~\ref{main thm2}]
Suppose that $M^n$ is an $n$-dimensional smooth closed $(\mathbb{Z}_2)^k$-manifold with  isolated fixed points.

Obviously, if $M^n$ bounds equivariantly, then by Theorem~\ref{invariant2}, $\langle (w_n^{(\mathbb{Z}_2)^k}(TM))^l, [M]\rangle=0$ for all $l\in \mathbb{N}$.

Conversely, assume that $\langle (w_n^{(\mathbb{Z}_2)^k}(TM))^l, [M]\rangle=0$ for all $ l \leq |M^{(\mathbb{Z}_2)^k}|$. Using the localization formula in Theorem~\ref{DKS}, we have that
\begin{align*}
 \langle (w_n^{(\mathbb{Z}_2)^k}(TM))^l, [M]\rangle = &\langle (\sigma_n(x_1, ..., x_n))^l, [M]\rangle  \\
 =& \sum_{p\in M^{(\mathbb{Z}_2)^k}}
 (\sigma_n(\lambda_{p,1}, ..., \lambda_{p,n}))^{l-1}\\
 = & \sum_{p\in M^{(\mathbb{Z}_2)^k}}
 (e^{(\mathbb{Z}_2)^k}(T_pM))^{l-1}\\
 =& \ 0.
\end{align*}
By Lemma~\ref{equ}, all $e^{(\mathbb{Z}_2)^k}(T_pM), p\in M^{(\mathbb{Z}_2)^k}$ appear in pairs. This means by the localization formula in Theorem~\ref{DKS} that all equivariant Stiefel--Whitney numbers vanish. Moreover, it
 follows from Theorem~\ref{invariant2} that $M^n$ bounds equivariantly.
\end{proof}

\begin{cor}\label{pair}
Let $M^n$ be an $n$-dimensional smooth closed $(\mathbb{Z}_2)^k$-manifold with a finite fixed point set. Then $M^n$ bounds equivariantly  if and only if the equivariant Euler classes $e^{(\mathbb{Z}_2)^k}(T_pM)$ for all $p\in M^{(\mathbb{Z}_2)^k}$ appear in pairs.
\end{cor}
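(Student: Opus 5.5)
The plan is to combine Theorem~\ref{main thm2}, Lemma~\ref{equ} and tom Dieck's Theorem~\ref{invariant2}, with the localization formula of Theorem~\ref{DKS} as the bridge. Here ``appear in pairs'' is read as: every value of $e^{(\mathbb{Z}_2)^k}(T_pM)$ in $H^*(B(\mathbb{Z}_2)^k;\mathbb{Z}_2)$ is attained by an even number of fixed points.

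For sufficiency, assume the Euler classes appear in pairs. The first idea is to apply Lemma~\ref{equ}. It gives $\sum_{p}(e^{(\mathbb{Z}_2)^k}(T_pM))^h=0$ for all $h\ge 0$. By the computation in the proof of Theorem~\ref{main thm2}, this sum equals $\langle (w_n^{(\mathbb{Z}_2)^k}(TM))^{h+1},[M]\rangle$. Hence all these numbers vanish for $l\le |M^{(\mathbb{Z}_2)^k}|$. The case $l=0$ is automatic by Remark~\ref{deg<2n} when $n>0$; when $n=0$ the pairing condition gives it directly. Theorem~\ref{main thm2} then says $M^n$ bounds equivariantly.

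For necessity, assume $M^n$ bounds equivariantly. By Theorem~\ref{invariant2} all equivariant Stiefel--Whitney numbers vanish. In particular $\langle (w_n^{(\mathbb{Z}_2)^k}(TM))^{l},[M]\rangle=0$ for every $l\ge 1$. By Theorem~\ref{DKS} this reads $\sum_p (e^{(\mathbb{Z}_2)^k}(T_pM))^{l-1}=0$ for all $l\ge1$, so in particular for $0\le h=l-1<|M^{(\mathbb{Z}_2)^k}|$. The converse direction of Lemma~\ref{equ} then forces the Euler classes to appear in pairs.

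The only delicate point is making sure Lemma~\ref{equ} really yields the even-multiplicity statement. I would check this directly. Group the fixed points by the value $e$ of their Euler class, and let $c_e\in\mathbb{Z}_2$ be the parity of the number of points with that value. Let $e_1,\dots,e_m$ be the distinct values with $c_{e_j}=1$, where $m\le |M^{(\mathbb{Z}_2)^k}|$. Then $\sum_j e_j^h=0$ for $h<m$. The Vandermonde determinant $\prod_{i<j}(e_j-e_i)$ is nonzero in the integral domain $\mathbb{Z}_2[a_1,\dots,a_k]$. So the all-ones vector cannot lie in its kernel unless $m=0$, which is exactly the required pairing. Everything else is bookkeeping through the localization formula.
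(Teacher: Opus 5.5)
Your proposal is correct and follows essentially the paper's route. The paper gives no separate proof of this corollary: it is read off from Lemma~\ref{equ}, Theorem~\ref{invariant2}, and the localization identity $\langle (w_n^{(\mathbb{Z}_2)^k}(TM))^l,[M]\rangle=\sum_p (e^{(\mathbb{Z}_2)^k}(T_pM))^{l-1}$ from the proof of Theorem~\ref{main thm2}, which is exactly how you argue. The only difference is in the sufficiency direction, where you invoke Theorem~\ref{main thm2} as a black box; the paper's argument passes directly from paired Euler classes to the vanishing of all the localization sums. That direct step implicitly uses that equal Euler classes force equal weight multisets, by unique factorization in $\mathbb{Z}_2[a_1,\dots,a_k]$, and your route inherits the same fact through Theorem~\ref{main thm2}.
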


\begin{rem}\label{Stong}
Corollary \ref{pair} provides a new criterion for the vanishing of the equivariant bordism class of \(M^n\), formulated via  the equivariant Euler classes of tangent  representations at all fixed points.  In contrast, a classical theorem of Stong~ \cite{St} asserts that \(M^n\) bounds equivariantly  precisely when the tangent representations \(T_pM\) over \(p\in M^{(\mathbb {Z}_2)^k}\) occur in  pairs, up to isomorphism. When \(k=n\), the vanishing of the equivariant bordism class can also alternatively be characterized using the dual representations of these tangent modules; relevant details can be found in \cite{CLT,LT2}.
 \end{rem}

\section{Applications to Kosniowski’s Conjecture and its toric generalization}
\label{apply-KC}

This section uses
Corollaries~\ref{md-deg inequ} and \ref{sharp bound} to investigate Kosniowski’s conjecture and its toric generalization.

\begin{conj}[Kosniowski~\cite{K}]
 Suppose that $M^{2n}$ is a unitary $S^1$-manifold with isolated fixed points. If $M^{2n}$ is not a boundary, then  the number of fixed points is  greater than  a positive linear function $f(n)$ of $n$.
 \end{conj}

 \begin{rem}
  Kosniowski conjectured that the candidate function is \(f(n) = \tfrac{n}{2}\). Moreover, Wen and Ma extended this statement to the \(T^k\)-equivariant setting, formulating a generalized version recorded as Conjecture 2 in \cite{WM}, which we here refer to as the {\em toric generalization}.
\end{rem}

Rearranging the inequality \eqref{important-inequ} gives
$$
\chi(M^{2n}) > \dfrac{2n}{\boldsymbol{m}(k, M^{2n})},
$$
which establishes a direct connection to Kosniowski's  conjecture and its toric generalization.
Our rearranged inequality  suggests taking $f(n)=\dfrac{2n}{{\bf m}(k, M^{2n})}$, thus settling the existence problem of  a positive linear function $f(n)$
 that provides  a lower bound for  $\chi(M)$ in  both Kosniowski's conjecture and its toric generalization.  We formalize this result as the following proposition.

\begin{prop}\label{ex-lin-bound}
    Suppose that $M^{2n}$ is a unitary $T^k$-manifold with isolated fixed points. If $M^{2n}$ is not a boundary, then    \begin{equation}\label{important-inequ-c}
\chi(M^{2n}) > \dfrac{2n}{\boldsymbol{m}(k, M^{2n})}.
\end{equation}
\end{prop}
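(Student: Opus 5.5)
The inequality \eqref{important-inequ-c} is just a rearrangement of the contrapositive of Corollary~\ref{md-deg inequ}, so the plan is to derive that corollary from Theorem~\ref{main thm1} and then divide. Throughout, ``not a boundary'' is read in the equivariant sense, as in the toric generalization of Kosniowski's conjecture. This reading is harmless: if $M^{2n}$ bounds equivariantly, then forgetting the action shows it bounds non-equivariantly. Contrapositively, a non-bounding $M^{2n}$ does not bound equivariantly.

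First I would prove Corollary~\ref{md-deg inequ}. Choose a distinguishing polynomial $g$ of minimal degree ${\bf m}={\bf m}(k,M^{2n})$; it exists by Lemma~\ref{polynomial}. It may be taken homogeneous. Indeed, each component of $g$ in a fixed degree is again symmetric, and restricting to fixed points respects degree, because $g(\lambda_{p,1},\dots,\lambda_{p,n})$ decomposes into homogeneous pieces in the graded ring $\mathbb{Z}[t_1,\dots,t_k]$. If $g$ separates two classes, some homogeneous component of $g$ separates them. I would handle this by \emph{defining} $\deg g$ as the top degree, so that every $g^l$ has degree at most $l\,{\bf m}$ and its homogeneous parts have degree at most $l\,{\bf m}$.

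The key step is now as follows. Suppose ${\bf m}\,\chi(M^{2n})\le 2n$ and $l<|M^{T^k}|=\chi(M^{2n})$. Then $l\,{\bf m}<2n$, so every homogeneous component of $g^l$ has degree less than $\dim M$. By Remark~\ref{deg<2n} and linearity, $\langle g^l,[M]\rangle=0$ for all such $l$. Theorem~\ref{main thm1} then says $M^{2n}$ bounds equivariantly, which proves Corollary~\ref{md-deg inequ}. Note that $\chi(M^{2n})=|M^{T^k}|$ by the cited result of \cite{AP}.

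Finally, take $M^{2n}$ not bounding. By the contrapositive, ${\bf m}\,\chi(M^{2n})>2n$. Since ${\bf m}$ is a positive even integer, dividing gives \eqref{important-inequ-c}.

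The only delicate point is the degree bookkeeping. The paper's distinguishing polynomial \eqref{distin-poly} is inhomogeneous, so $\deg g$ must mean the maximal degree of its components. Once that convention is fixed, Remark~\ref{deg<2n} applies component by component and the argument goes through.
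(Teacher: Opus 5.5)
Your proof is correct and follows the paper's own route: the paper gets Corollary~\ref{md-deg inequ} directly from Theorem~\ref{main thm1} using the degree vanishing in Remark~\ref{deg<2n}, takes its contrapositive to obtain \eqref{important-inequ}, and divides by ${\bf m}(k,M^{2n})$. One caveat: your aside that $g$ ``may be taken homogeneous'' does not follow from your pairwise argument, since different pairs of classes may be separated by different homogeneous components. Nothing in your proof depends on it, though; the top-degree convention you adopt is all that is needed.
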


 \begin{rem}
Substituting the candidate function $f(n)=\tfrac{n}{2}$, both Kosniowski’s conjecture and its toric generalization  reduce to the inequality $$2\chi(M^{2n})>n.$$
Meanwhile,  our
 inequality $\boldsymbol{m}(k, M^{2n})\chi(M^{2n})>2n$ in \eqref{important-inequ} also establishes  $2\chi(M^{2n})>n$ whenever   $\boldsymbol{m}(k, M^{2n})\leq 4$.
 Indeed, we know from \cite[Theorem IV.10.5]{B} and \cite[Lemma 7.4.3]{BP1}   that any closed connected smooth orientable (or unitary) manifold $N$ admitting an effective \(T^k\)-action with \(k\ge1\) contains a circle subgroup \(S\subset T^k\) satisfying \(N^S=N^{T^k}\). Consequently, the validity of \(2\chi(M^{2n}) > n\) for all unitary \(S^1\)-manifolds automatically implies the same bound for arbitrary unitary \(T^k\)-manifolds with \(k>1\).

\end{rem}

When $k=1$, partial progress supporting this inequality $2\chi(M^{2n}) > n$ has  been achieved  in~\cite{CKP, J, Ko1,  LL, LM, PT, W} under restrictions such as $\chi(M^{2n})\leq 3$ or additional geometric and algebraic  constraints for unitary $S^1$-manifolds. We note that the key lemma \cite[Lemma 3.1]{L-gap} employed to attack Kosniowski’s conjecture contains a logical gap, which was identified by the second author of this paper.

 When $k=n$,   Corollary \ref{conj-solution} establishes the inequality \(2\chi(M^{2n}) > n\) using the bounds of $\boldsymbol{m}(k, M^{2n})$, which gives an affirmative answer to the toric generalization of Kosniowski’s conjecture.
 This same inequality is also derived independently in \cite{LT1}.
We summarize as follows.

\begin{prop}
   The toric generalization of Kosniowski’s conjecture holds for $k=n$.
\end{prop}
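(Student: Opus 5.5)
The statement to prove is the final proposition: the toric generalization of Kosniowski's conjecture holds for $k=n$, i.e.\ $2\chi(M^{2n})>n$ for every $2n$-dimensional unitary $T^n$-manifold $M^{2n}$ with a finite fixed point set that does not bound equivariantly. The plan is to derive it directly from Corollary~\ref{conj-solution}, which in turn rests on Proposition~\ref{gn=k} and Corollary~\ref{md-deg inequ}.

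\emph{Step 1: bound the minimal distinguishing degree.} By Lemma~\ref{n=k}, two fixed points lie in the same $\sim$-class as soon as $c_1^{T^n}$ and $c_2^{T^n}$ agree at them. Since $H^*(BT^n)$ is graded, the equality
\[
c_1^{T^n}(T_pM)+c_2^{T^n}(T_pM)=c_1^{T^n}(T_qM)+c_2^{T^n}(T_qM)
\]
splits into its degree-$2$ and degree-$4$ components. Hence $g=c_1^{T^n}(TM)+c_2^{T^n}(TM)$ is a distinguishing polynomial, exactly as in Proposition~\ref{gn=k}. It has degree $4$, so $\mathbf m(n,M^{2n})\le 4$.

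\emph{Step 2: turn non-bounding into an inequality.} Suppose $M$ does not bound equivariantly. The contrapositive of Corollary~\ref{md-deg inequ} gives $\mathbf m\,\chi(M)>2n$. The mechanism behind that corollary is as follows. Take a distinguishing $g$ of degree $\mathbf m$, so each $g^l$ has degree $l\mathbf m$. If $\mathbf m\,\chi(M)\le 2n$, then every $l<\chi(M)=|M^{T^n}|$ satisfies $l\mathbf m<2n$. By Remark~\ref{deg<2n}, every number $\langle g^l,[M]\rangle$ with $l<|M^{T^n}|$ then vanishes. Theorem~\ref{main thm1} would then force $M$ to bound, a contradiction.

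\emph{Step 3: conclude.} Combining the two steps gives $4\chi(M)\ge \mathbf m\,\chi(M)>2n$, that is, $2\chi(M)>n$. This is precisely the toric Kosniowski bound with $f(n)=n/2$. Here I use $\chi(M)=|M^{T^n}|$ (cf.~\cite{AP}) to identify the number of fixed points with the Euler characteristic.

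The step I expect to need the most care is the Vandermonde argument inside Theorem~\ref{main thm1}, which is what Step 2 ultimately depends on. The number of vanishing identities available is governed by $|M^{T^n}|$, while the Vandermonde system needs $m=|M^{T^n}/\!\sim|$ of them, namely $l=0,\dots,m-1$. Since $m\le |M^{T^n}|$, these are enough, and the Vandermonde matrix is invertible over the fraction field of $\mathbb Z[t_1,\dots,t_n]$ because the values $g(\lambda_{p,\cdot})$ are distinct on distinct classes. This yields $\varepsilon_{[p]}=0$ for every class. Lemma~\ref{zero} and Theorem~\ref{invariant1} then complete the argument.
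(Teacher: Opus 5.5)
Your proposal is correct and matches the paper's route: the paper also deduces this proposition from Corollary~\ref{conj-solution}, which gives $\mathbf m(n,M^{2n})\le 4$ via the degree-$4$ distinguishing polynomial $g=c_1^{T^n}(TM)+c_2^{T^n}(TM)$ of Proposition~\ref{gn=k} and then applies the contrapositive of Corollary~\ref{md-deg inequ} to get $2\chi(M^{2n})>n$. Your account of how Corollary~\ref{md-deg inequ} follows from Theorem~\ref{main thm1} and Remark~\ref{deg<2n} is also accurate, including the count $|M^{T^n}/\!\sim|\le |M^{T^n}|$ needed for the Vandermonde step.
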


By contrast, Wen and Ma treated the case \(k=n-1\) and proved in \cite[Theorem 3]{WM} the weaker lower bound \(\lceil \tfrac{n}{6} \rceil + 1\) for \(\chi(M^{2n})\), which only gives the looser estimate \(6\chi(M^{2n}) > n\) (also see Corollary~\ref{weak-ineq}),  where \(\lceil\cdot\rceil\) denotes the ceiling function.  In fact,  as seen in Subsection~\ref{Case-k=n-1}, their key technical lemma (see also Lemma~\ref{Case-k=n-1})
merely guarantees the existence of a distinguishing polynomial of degree 12. In Subsection \ref{Case-k=n-1}, we examine the unitary \(T^2\)-manifold \(S^6\cong G_2/SU(3)\) and verify that the value \(\boldsymbol{m}(2,S^6)=6\) is achievable. Substituting this into our main inequality \(\boldsymbol{m}(2,S^6)\chi(S^6) > 6\) simplifies to the sharp bound \(\chi(S^6) > 1\),  consistent with the estimate $2\chi(M^{2n})>n$.
This example shows that the inequality
${\bf m}(k, M^{2n})\chi(M)>2n$
in
(\ref{important-inequ})
may lead to a sharp inequality even for ${\bf m}(k, M^{2n})>4$, from which we can further induce $2\chi(M^{2n})>n$.
Moreover, this example also  reveals a limitation of Kosniowski’s candidate function $f(n)={n\over 2}$.

In what follows, we can show from  Corollary~\ref{sharp bound} that  \(f(n)=\dfrac{n}{2}\) may not be a sharp lower bound. To proceed, let us restate Corollary~\ref{sharp bound} below and furnish its proof.

\begin{cor}\label{sharp bound-1}
 Suppose   $M^{2n}$ is a  $2n$-dimensional unitary $T^k$-manifold with a finite fixed point set, satisfying further that  $M^{2n}$ bounds non-equivariantly, but does not bound equivariantly. Then  \begin{equation}\label{sharp ineq-1}
   {\bf m}(k, M^{2n})\chi(M^{2n})>2n {}+{\bf m}(k, M^{2n}).\end{equation}
\end{cor}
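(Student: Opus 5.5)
We argue by contradiction, refining the proof of Theorem~\ref{main thm1} with one extra vanishing condition that comes from non-equivariant boundedness. Let $g$ be a distinguishing polynomial with $\deg g={\bf m}:={\bf m}(k,M^{2n})$. We may take $g$ homogeneous of degree ${\bf m}$, by passing to a homogeneous component that still distinguishes the classes. Write $M^{T^k}/\sim=\{[p_1],\dots,[p_m]\}$ with $m\le \chi(M^{2n})=|M^{T^k}|$. Suppose, contrary to \eqref{sharp ineq-1}, that ${\bf m}\chi(M^{2n})\le 2n+{\bf m}$, i.e. $(\chi(M^{2n})-1){\bf m}\le 2n$. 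We aim to show $\varepsilon_{[p_1]}=\cdots=\varepsilon_{[p_m]}=0$. Lemma~\ref{zero} and Theorem~\ref{invariant1} then force $M^{2n}$ to bound equivariantly, which is a contradiction.

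\emph{Step 1 (low powers).} For $0\le l\le \chi(M^{2n})-2$ we have $\deg g^l=l{\bf m}<2n$. By Remark~\ref{deg<2n}, $\langle g^l,[M]\rangle=0$ for these $l$. This gives $\chi(M^{2n})-1$ equations of the form \eqref{eq}.

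\emph{Step 2 (the extra equation).} For $l=\chi(M^{2n})-1$ we get $l{\bf m}\le 2n$. If $l{\bf m}<2n$, the number again vanishes by degree. If $l{\bf m}=2n$, then $\langle g^l,[M]\rangle$ lies in $H^0(BT^k)=\mathbb Z$ and equals the ordinary Chern number obtained by forgetting the action (restricting along $M\hookrightarrow ET^k\times_{T^k}M$). Since $M^{2n}$ bounds non-equivariantly as a unitary manifold, all ordinary Chern numbers vanish, so this number is $0$. Hence $\langle g^l,[M]\rangle=0$ for all $0\le l\le \chi(M^{2n})-1$, i.e. for all $l<|M^{T^k}|$.

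\emph{Step 3 (Vandermonde).} Since $m\le |M^{T^k}|$, the Vandermonde argument in the proof of Theorem~\ref{main thm1} applies verbatim and yields $\varepsilon_{[p_i]}/\sigma_n(\lambda_{p_i,\cdot})=0$ for all $i$. This gives the desired contradiction.

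The main obstacle is Step 2: justifying that the top-degree equivariant number of degree exactly $2n$ coincides with the non-equivariant Chern number $\langle g^l(c_1,\dots,c_n),[M]\rangle$. Here $g$ is a polynomial in Chern classes with integer coefficients, and the claim is naturality of the Gysin map under restriction to the fibre $M$ of $ET^k\times_{T^k}M\to BT^k$. I would also double-check the reduction to homogeneous $g$. If that reduction fails, I would instead apply the bound to the top-degree homogeneous part of $g$: only that part contributes in degree $2n$, and lower-degree cross terms vanish by Remark~\ref{deg<2n}. The Vandermonde step, however, needs the full $g$ to be distinguishing, so homogeneity is the point to settle first.
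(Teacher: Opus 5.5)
Your argument is correct and is essentially the paper's proof. Powers $g^l$ with $l\deg g<2n$ vanish for degree reasons; the one power with $l\deg g=2n$ reduces, by restricting to the fibre, to an ordinary Chern number, which vanishes because $M$ bounds non-equivariantly; and the Vandermonde argument of Theorem~\ref{main thm1} then forces every $\varepsilon_{[p_i]}=0$. You do not need, and have not justified, the reduction to a homogeneous $g$. Your fallback already gives $\langle g^l,[M]\rangle=0$ for the full, non-homogeneous distinguishing $g$, which is all the Vandermonde step uses. The degree-$2n$ part of $g^l$ is an integer combination of ordinary Chern numbers, and every other part has degree $<2n$. This matches the paper, whose $g=\sum N^i\sigma_i$ is itself not homogeneous.
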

\begin{proof}
First, it follows from the ABBV formula that for a partition $\omega=(i_1, ..., i_r)$ with $i_1+\cdots+i_r=n$,
$$\langle c_\omega^{T^k}(TM), [M]\rangle= \langle  c_\omega (TM), [M]\rangle$$
where $c_\omega(TM)$ denotes a cup product of ordinary cohomology Chern classes in $H^*(M)$. Thus, if $M$ bounds non-equivariantly, then the pairing $\langle c_\omega^{T^k}(TM), [M]\rangle= \langle c_\omega (TM), [M]\rangle$ vanishes. Furthermore, suppose there exists a distinguishing polynomial $g$ with degree satisfying
$\deg g \cdot(\chi(M)-1)\leq 2n$. Then for every integer $0\leq l\leq \chi(M)-1$,
$$\langle g^l, [M]\rangle=0.$$
By Theorem~\ref{main thm1}, this  implies that $M$ bounds equivariantly, which yields a contradiction.
Hence, the required inequality (\ref{sharp ineq-1}) follows from the assumptions of the corollary.
\end{proof}

Applying Corollary~\ref{sharp bound-1}, we can obtain a  lower bound sharper than ${n\over 2}$
whenever ${\bf m}(k,M^{2n})\leq 4$. This partially settles Kosniowski’s conjecture and its toric generalization, provided that a unitary \(T^k\)-manifold \(M^{2n}\) with isolated fixed points  bounds non-equivariantly but not equivariantly and satisfies \(\boldsymbol{m}(k,M^{2n})\leq 4\).

\begin{prop}\label{sharper bound}
    Under the hypotheses of Corollary \ref{sharp bound-1}, whenever ${\bf m}(k,M^{2n})\leq 4$,  we have that
    $$\chi(M^{2n})> {n\over 2}+1.$$
    Moreover, Kosniowski's conjecture and its toric generalization hold.
\end{prop}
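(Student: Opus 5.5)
Under the hypotheses of Corollary~\ref{sharp bound-1}, I would derive Proposition~\ref{sharper bound} directly from inequality~\eqref{sharp ineq-1}. Write ${\bf m}={\bf m}(k,M^{2n})$. Corollary~\ref{sharp bound-1} gives ${\bf m}\,\chi(M^{2n})>2n+{\bf m}$. Dividing by the positive integer ${\bf m}$ yields
$$\chi(M^{2n})>\frac{2n}{{\bf m}}+1.$$
Because ${\bf m}\le 4$ by assumption, we have $\frac{2n}{{\bf m}}\ge \frac{n}{2}$. Hence $\chi(M^{2n})>\frac{n}{2}+1$, which is the first claim. By Lemma~\ref{polynomial} the degree ${\bf m}$ is a positive even integer, so ${\bf m}\in\{2,4\}$; in the case ${\bf m}=2$ we even get the stronger bound $\chi(M^{2n})>n+1$.

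For the second assertion, recall that $\chi(M^{2n})=|M^{T^k}|$ by the result of \cite{AP} cited in the introduction. In Kosniowski's conjecture and its toric generalization, ``not a boundary'' means the manifold does not bound equivariantly, and this is part of our standing hypothesis. We therefore need the number of fixed points to exceed a positive linear function of $n$, with the candidate $f(n)=\frac n2$. The bound just obtained gives $|M^{T^k}|=\chi(M^{2n})>\frac n2+1>\frac n2$, so both the conjecture and its toric generalization hold with Kosniowski's candidate function, and in fact with the improved function $\frac n2+1$. For $k=1$ this is exactly Kosniowski's original statement. For general $k$ it is the form recorded in \cite{WM}, and the only input needed is the fixed point count, which the same inequality supplies.

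There is no real obstacle here, since everything reduces to arithmetic on \eqref{sharp ineq-1}. The one point to state carefully is the scope: the conclusion is claimed only for manifolds satisfying the extra hypotheses (non-equivariantly bounding, not equivariantly bounding, ${\bf m}\le 4$), not for the conjecture in full generality. I would also note that dividing by ${\bf m}$ is legitimate because ${\bf m}>0$.
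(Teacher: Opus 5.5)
Your proposal is correct and is essentially the paper's own argument: divide \eqref{sharp ineq-1} by ${\bf m}(k,M^{2n})>0$ to get $\chi(M^{2n})>\frac{2n}{{\bf m}(k,M^{2n})}+1\ge\frac n2+1>\frac n2$, then read off the Kosniowski-type bound via $\chi(M^{2n})=|M^{T^k}|$. Your refinement ${\bf m}\in\{2,4\}$ (so $\chi(M^{2n})>n+1$ when ${\bf m}=2$) is also fine; strictly speaking, positivity of ${\bf m}$ comes from the hypothesis that $M^{2n}$ does not bound equivariantly, because \eqref{ABBV-1} with $f=1$ then forces at least two equivalence classes of fixed points, so no constant polynomial can be distinguishing.
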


\begin{proof}
    Using our minimal distinguishing degree,  we derive the following  refined strict inequality by rearranging the inequality \eqref{sharp ineq-1}
$$\chi(M^{2n}) > \frac{2n}{m(k,M^{2n})} + 1,$$
which is significantly stronger than the  candidate function $f(n)={n\over 2}$ conjectured by Kosniowski, as shown below:
$$\chi(M^{2n})>{{2n}\over {{\bf m}(k, M^{2n})}}+1\geq {n\over 2}+1>{n\over 2}=f(n)$$
 whenever ${\bf m}(k,M^{2n})\leq 4$.
\end{proof}

\begin{rem}
 We attempted to establish the inequality \(\chi(M^{2n})> \frac{n}{2}+1\) without the hypothesis \(m(k,M^{2n})\le 4\), but our efforts did not yield a complete proof.
\end{rem}



\end{document}